\newcommand{\N}{\mathbb{N}}
\newcommand{\Z}{\mathbb{Z}}
\def\restrict#1{\raise-.5ex\hbox{\ensuremath|}_{#1}}
\theoremstyle{plain}
\newtheorem{theorem}{Theorem}[section]
\newtheorem{proposition}[theorem]{Proposition}
\newtheorem{corollary}[theorem]{Corollary}
\newtheorem{lemma}[theorem]{Lemma}
\theoremstyle{definition}
\newtheorem{definition}[theorem]{Definition}
\newtheorem{remark}[theorem]{Remark}
\numberwithin{equation}{section}
\newtheorem{example}[theorem]{Example}
\newtheorem{theoremx}{Theorem}
\newtheorem{conjecturex}[theoremx]{Conjecture}
\begin{document}

\markboth{Elder, Piggott and Townsend}{On $k$-geodetic graphs and groups}

\title{On $k$-geodetic graphs and groups}

\author{Murray Elder}
\address{University of Technology Sydney, Ultimo NSW 2007, Australia}
\email{murray.elder@uts.edu.au}

\author{Adam Piggott}
\address{Australian National University, Canberra ACT  2601, Australia}
\email{adam.piggott@anu.edu.au}

\author{Kane Townsend}
\address{University of Technology Sydney, Ultimo NSW 2007, Australia}
\email{kane.townsend@uts.edu.au}

\makeatletter
\@namedef{subjclassname@2020}{\textup{2020} Mathematics Subject Classification}
\makeatother

\subjclass[2020]{primary 20F65 ; secondary  05C12, 20F67}

\keywords{$k$-geodetic graphs, hyperbolic groups, virtually-free groups}

\maketitle
\vspace{-2em}
\begin{abstract}
We call a graph $k$-geodetic, for some $k\geq 1$,  if it is connected and between any two vertices there are at most $k$ geodesics. It is shown that any hyperbolic group with a $k$-geodetic Cayley graph is virtually-free. Furthermore, in such a group the centraliser of any infinite order element is an infinite cyclic group. These results were  known previously only in the case that $k=1$. A key tool used to develop the theorem is a new graph theoretic result concerning ``ladder-like structures'' in a $k$-geodetic graph.
\end{abstract}

\section{Introduction}

For any positive integer $k$, we will call a (possibly infinite) graph \emph{$k$-geodetic} if the graph is connected and between any pair of vertices there are at most $k$ geodesics. For example, a tree is $1$-geodetic and the complete bipartite graph $K_{k,l}$ is $(\max\{k,l\})$-geodetic. While 1-geodetic graphs (known simply as \emph{geodetic} graphs) \cite{Ore, Frasser} and  2-geodetic graphs \cite{bigeodetic} have been studied, it seems that little work has been done on $k$-geodetic graphs. Our first result is a necessary condition for a graph to be $k$-geodetic. We introduce a technical notion of a \emph{ladder-like structure} with parameters for height and width (see Definition \ref{lls}).

\begin{theoremx}\label{LadderTheorem}
Let $m$ and $k$ be positive integers. In any $k$-geodetic graph there is a universal bound on the height of ladder-like structures of width $m$. 
\end{theoremx}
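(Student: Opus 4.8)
The plan is to prove the contrapositive: if there is no universal bound on the height of ladder-like structures of width $m$, then for every $N$ we can find a pair of vertices joined by more than $k$ geodesics, contradicting $k$-geodeticity. Since I have not yet seen the precise Definition \ref{lls}, let me reconstruct the intended notion and build the argument around it. I imagine a ladder-like structure of width $m$ and height $h$ as consisting of two "rails" (geodesic paths) together with $h$ "rungs," where the rungs are short paths (of length at most $m$, or connecting vertices at controlled distance) joining the two rails at regularly increasing points, and where the rails themselves are long geodesics. The key feature should be that the combinatorial data forces many distinct geodesics between the two extreme vertices of the configuration once the height $h$ is large.

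First I would fix the width $m$ and suppose, for contradiction, that ladder-like structures of every height exist. The main idea is a pigeonhole/counting argument on rungs. Each rung offers a "choice": near a given rung one can either continue along a rail or cross over to the other rail. If one can show that crossing at a rung produces a genuinely new geodesic between two fixed endpoints (the bottom of one rail and the top of the other, say), and that these choices can be made independently at sufficiently many well-separated rungs, then a structure of height $h$ yields on the order of $2^{(\text{number of independent rungs})}$ distinct geodesics. Choosing $h$ large enough that the number of independent crossing points exceeds $\log_2 k$ would then give more than $k$ geodesics between a single pair of vertices, the desired contradiction.

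The technical heart is establishing that the crossings really do produce \emph{distinct} \emph{geodesics} (not merely distinct paths, and not paths that secretly coincide or fail to be shortest). The plan here is: (i) use the rail-and-rung metric data to verify that a path which switches rails at a rung has the same length as a path staying on one rail, so switching preserves geodesic status; and (ii) argue that two switching patterns differing at some rung trace out genuinely different vertex sequences, hence different geodesics. Controlling distances will require the triangle inequality together with the width bound $m$ on the rungs, and I anticipate needing to space out the rungs at which I allow switching — selecting every $(m+1)$-th rung, or similar — so that the local modifications at different chosen rungs do not interfere and the length bookkeeping stays exact.

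The step I expect to be the main obstacle is precisely this length/geodesic verification, i.e.\ ensuring that arbitrary combinations of rail-switches remain geodesics between the two fixed endpoints. A naive switch might add length proportional to the rung length and thereby fail to be a geodesic; so the definition of ladder-like structure presumably encodes a compatibility condition on the rail and rung lengths (for instance, that the two rail segments between consecutive rungs, together with the two rungs, form a ``geodesic quadrilateral'' in which opposite crossing routes have equal length). Once that compatibility is in hand, the counting is routine, but extracting it cleanly from Definition \ref{lls} and propagating it across many rungs simultaneously is where the care is needed. If independence of switches turns out to be too strong to establish, the fallback is a weaker but still sufficient count: even a linear-in-$h$ number of distinct geodesics (one per admissible rung, obtained by a single switch) already exceeds $k$ for $h$ large, so I would first secure that easier linear bound and only invoke the exponential count if the independence genuinely holds.
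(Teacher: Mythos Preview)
Your fallback linear count is aimed at the right configuration, but the step you correctly flag as the main obstacle is exactly where the plan breaks, and Definition~\ref{lls} does not rescue you in the way you hope. A ladder-like structure of width $m$ is simply a pair of asynchronously disjoint geodesics $\gamma_x,\gamma_y$ that are exactly $m$-apart at $r$ synchronous times; there is no ``geodesic-quadrilateral'' compatibility built in. If $x_j,y_j$ denote the rail points at the $j$-th rung, the single-switch paths $x_0\to x_j\to y_j\to y_k$ for $j\in[0,k]$ all have the common length $m+\sum_{i=1}^{k}d_i$, but the true distance $d(x_0,y_k)$ is only constrained to lie in $[\sum d_i-m,\ \sum d_i+m]$, so in general none of these paths is a geodesic. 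Your exponential count fares worse: each additional switch adds another $m$ to the length, so different switching patterns do not even have equal length, let alone geodesic length.

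The paper extracts a different conclusion from the same $k+1$ equal-length paths. The key tool is Lemma~\ref{OneOrTwoLess}: in a $k$-geodetic graph, $k+1$ distinct paths of length $n$ between two vertices cannot all be geodesics, and one can then locally shorten one of them to produce a path of length $n-1$ or $n-2$. Applied to the single-switch paths from $x_0$ to $y_k$, this yields a path of length at most $m-1+\sum_{i=1}^{k}d_i$. Repeating the construction on each block $[x_{jk},y_{(j+1)k}]$ for $j\in[0,2k]$, extending all shortcuts to run from $x_0$ to $y_{k(2k+1)}$, and pigeonholing on the saving (either $1$ or $2$) gives $k+1$ paths of a common length, so Lemma~\ref{OneOrTwoLess} applies again. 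Iterating $2m+1$ rounds accumulates a total saving exceeding $2m$; the resulting path from $x_0$ to $y_r$, followed by the final rung back to $x_r$, is then strictly shorter than $\gamma_x$, contradicting that $\gamma_x$ is a geodesic. So the contradiction the paper reaches is not ``too many geodesics between two points'' but ``a rail that is not actually a geodesic,'' and the engine is an iterated shortening lemma rather than the direct count you propose.
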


 A group $G$ is called \emph{$k$-geodetic} if it admits a finite inverse-closed generating set $S$ such that the corresponding undirected Cayley graph $\text{Cay}(G, S)$ is $k$-geodetic.  It is clear that any finite group $G$ is geodetic (with $S = G \setminus \{1_G\}$). The hyperbolic groups are a natural next class of groups to investigate.  If $G$ is hyperbolic, then geodesics fellow travel and we may use this property to construct ladder-like structures. We parlay this idea into our second result which demonstrates that the hyperbolic $k$-geodetic groups form a proper subclass of the virtually-free groups.  

\begin{theoremx}\label{TheTheorem}
Let $k$ be a positive integer.  If $G$ is a hyperbolic $k$-geodetic group, then $G$ is virtually-free and in $G$ the centraliser of any infinite order element is an infinite cyclic group.
\end{theoremx}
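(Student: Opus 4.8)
The plan is to treat the two conclusions separately, in each case manufacturing ladder-like structures of bounded width but unbounded height and then invoking Theorem~\ref{LadderTheorem} for a contradiction. Throughout I fix a finite inverse-closed generating set $S$ making $\Gamma=\mathrm{Cay}(G,S)$ a $k$-geodetic $\delta$-hyperbolic graph, and I lean on two standard features of hyperbolicity: the powers of an infinite-order element trace a quasigeodesic (so $\langle g\rangle$ is quasi-isometrically embedded), and any two geodesics with common endpoints $2\delta$-fellow-travel (thin bigons).

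For the centraliser statement I would first record that in a hyperbolic group the centraliser $C(g)$ of an infinite-order element $g$ is infinite virtually cyclic and contains $g$ in its centre; since a finite-by-$D_\infty$ group has finite centre, $C(g)$ must be finite-by-$\mathbb{Z}$, say $1\to F\to C(g)\to \mathbb{Z}\to 1$ with $F$ finite, and it suffices to show $F=1$. Suppose not and pick $t\in F\setminus\{1\}$, a nontrivial torsion element commuting with $g$. Choosing a genuine geodesic $\gamma$ fellow-travelling the quasigeodesic $(g^i)_i$, the translate $t\gamma$ fellow-travels $(tg^i)_i=(g^it)_i$; because $g^{-i}tg^i=t$ we have $d(g^i,g^it)=|t|$ for every $i$, and $g^i\neq g^jt$ for all $i,j$ since $\langle g\rangle$ is torsion-free. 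Thus $\gamma$ and $t\gamma$ are two disjoint geodesics running at distance $|t|+O(\delta)$ apart for arbitrary length, i.e.\ a ladder-like structure of width bounded in terms of $|t|$ and $\delta$ and of unbounded height. This contradicts Theorem~\ref{LadderTheorem}, so $F=1$ and $C(g)\cong\mathbb{Z}$.

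For virtual freeness I would argue by contradiction from the same ladder bound. The thin-bigon property turns any pair of distinct geodesics between vertices $x,y$ into a ladder of width $\le 2\delta$ and height $d(x,y)$; applying Theorem~\ref{LadderTheorem} with $m=2\delta$ then yields a constant $H_0$ such that whenever two vertices admit distinct geodesics we must have $d(x,y)\le H_0$. In other words $\Gamma$ has \emph{bounded bigons}: geodesics between vertices at distance $>H_0$ are unique. It remains to show that a hyperbolic group with bounded bigons is virtually free, which I would do contrapositively: if $G$ is not virtually free then, being hyperbolic (hence finitely presented and, by Dunwoody's accessibility theorem, accessible), Stallings' theorem lets me decompose $G$ over finite subgroups until I reach a one-ended quasiconvex, hence one-ended hyperbolic, vertex subgroup $H$. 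It then suffices to produce arbitrarily long thin bigons inside $H$, and hence inside $\Gamma$ by quasiconvexity, contradicting bounded bigons.

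The main obstacle is exactly this last construction: exhibiting in a one-ended hyperbolic group pairs of distinct geodesics between points at arbitrarily large distance. Unlike the centraliser case these cannot come from parallel periodic axes, since a torsion-free one-ended group such as a surface group has no two distinct hyperbolic elements sharing an axis; the bigons must instead be built from the two-dimensional topology of the group. Concretely I expect to use one-endedness to find arbitrarily long, bounded-width ``corridors'' of defining relators in the Cayley complex of $H$ — the higher-genus analogue of a straight strip of octagons in a surface group — whose two sides are distinct geodesics staying uniformly close; equivalently, one can feed the failure of Manning's bottleneck criterion (which detects that $H$, not being virtually free, is not quasi-isometric to a tree) into the thin-bigon property to extract the required long thin bigons. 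Either way these furnish ladder-like structures of bounded width and unbounded height, contradicting Theorem~\ref{LadderTheorem}, so $G$ is virtually free. I expect the real work, and the precise matching to Definition~\ref{lls}, to lie in verifying that the corridor sides are genuinely geodesic and uniformly close.
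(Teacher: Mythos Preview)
Your centraliser argument is close in spirit to the paper's Lemma~\ref{murraysidea}, but you have not verified the hypothesis of Definition~\ref{lls}: the geodesic $\gamma$ you pick is \emph{not} the sequence $(g^i)$, only a geodesic fellow-travelling it, so the fact that $g^i\neq g^jt$ says nothing about whether $\gamma(i)=t\gamma(j)$ for some $i\neq j$. If $\gamma$ and $t\gamma$ co-travel asynchronously the ladder bound does not apply, and you need a separate argument (as in the paper, where co-travelling forces a word equation $u^2=xuy$ and hence non-primitivity of $u$) to dispose of that case. The paper's route is quite different: it first shows (Proposition~\ref{powersareregular}) that $\bigcup_n L_n$ is a regular language, pumps to find a primitive $y$ with all powers geodesic, and then applies Lemma~\ref{murraysidea} to $y$; your shortcut via the finite-by-$\mathbb Z$ structure of $C(g)$ would be slicker if the disjointness gap can be closed.

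The virtual-freeness half has a more serious hole. Your bounded-bigon step is essentially the paper's Lemma~\ref{boundonbigonsandtriangles}, but from there you propose to pass to a one-ended hyperbolic vertex group $H$ and manufacture arbitrarily long non-degenerate bigons inside it. You explicitly flag this as ``the main obstacle'' and offer only heuristics (relator corridors, Manning's bottleneck criterion); neither of these comes with a proof that the two corridor sides are \emph{geodesic} in $\Gamma$, which is exactly what Definition~\ref{lls} demands and is far from automatic for a quasiconvex subgroup. As written this is a programme, not a proof.

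The paper avoids this difficulty entirely. Rather than trying to exhibit long bigons in a putative one-ended piece, it uses the Gilman--Hermiller--Holt--Rees characterisation of virtually-free groups: it shows directly that the language of $S$-geodesics is $e$-locally excluding for $e=C(\lceil m\rceil,k)+1$, by observing that once a word first becomes non-geodesic, the offending suffix and a geodesic replacement are asynchronously disjoint, $m$-fellow-travelling, and hence $\lceil m\rceil$-close at most $C(\lceil m\rceil,k)$ times by Corollary~\ref{kboundedmclose}. This gives virtual freeness in one paragraph with no appeal to accessibility, Stallings, or the existence of long bigons in one-ended groups.
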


We note that $\mathbb{Z} \times \mathbb{Z}_2$ fails the centraliser condition of Theorem \ref{TheTheorem} and so is an example of a virtually-free group that is not $k$-geodetic for any positive integer $k$.

In 1997, Shapiro \cite{ShapiroPascal} asked if the geodetic groups are exactly the \emph{plain} groups. A group is \emph{plain} if it is isomorphic to a free product of finitely many finite groups and finitely many copies of $\mathbb{Z}$. There is a natural choice of generating set of a plain group so that the Cayley graph is geodetic. Although Shapiro's question remains unanswered in general, some progress has been made in the special case of hyperbolic groups. Papasoglu \cite[1.4]{Papasoglu} showed that hyperbolic geodetic groups are in fact virtually-free. Observing that hyperbolic geodetic groups admit presentation by finite Church-Rosser Thue systems, one may apply a result by Madlener and Otto\cite{MadlenerOtto} to conclude that in hyperbolic geodetic groups the centraliser of any infinite order element is infinite cyclic. Theorem \ref{TheTheorem} shows that hyperbolic $k$-geodetic groups satisfy the key constraints known to hold for hyperbolic geodetic groups.

Shapiro \cite[p.6]{ShapiroPascal} proved that if $G$ is virtually infinite cyclic and $k$-geodetic with respect to generating set $S$, then $G$ is isomorphic to either $\mathbb{Z}$ or $\mathbb{Z}_2 \ast \mathbb{Z}_2$ and $S$ is the standard generating set. Taken with the existing theory, Theorem \ref{TheTheorem} leaves us with the containments in Fig. \ref{fig:Containments} and is evidence in favour of the following conjecture. 
\begin{figure}[t]
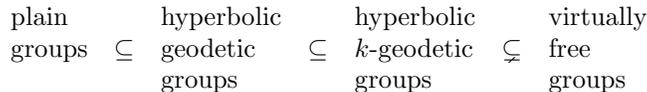

    \centering
\[\begin{array}{lllllll}
\text{plain} &  & \text{hyperbolic} &  & \text{hyperbolic} & & \text{virtually}\\
\text{groups} & \subseteq & \text{geodetic} & \subseteq & k\text{-geodetic} & \subsetneq & \text{free} \\
& & \text{groups} & & \text{groups} && \text{groups}
\end{array}\]
    \caption{\small Known containments.}
    \label{fig:Containments}
\end{figure}

\begin{conjecturex}\label{geodeticequalskgeodetic}
A hyperbolic group $G$ is $k$-geodetic for some positive integer $k$ if and only if $G$ is geodetic. Furthermore, if $G$ is infinite and $\text{Cay}(G,S)$ is $k$-geodetic for some finite generating set $S$ and some $k\geq 1$, then $\text{Cay}(G,S)$ is geodetic.
\end{conjecturex}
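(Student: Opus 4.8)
The plan is to reduce the entire statement to a single geometric assertion and then attack that assertion by a bigon-extension argument resting on Theorem~\ref{LadderTheorem}. First the easy reductions: the ``if'' direction of the equivalence is immediate, since a geodetic graph is $1$-geodetic and hence $k$-geodetic for every $k\ge 1$. For ``only if'', a finite group is geodetic with $S=G\setminus\{1_G\}$, so we may assume $G$ is infinite; granting the ``furthermore'' clause, $\mathrm{Cay}(G,S)$ is then geodetic and in particular $G$ is geodetic. Thus everything rests on the claim that if $G$ is infinite hyperbolic and $\mathrm{Cay}(G,S)$ is $k$-geodetic, then $\mathrm{Cay}(G,S)$ is geodetic. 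Call a pair of distinct geodesic words representing the same element a \emph{geodesic bigon}; failure of geodeticity is exactly the existence of a bigon, so the goal is to show no bigon exists.

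The first substantive step is to bound bigons. Because $G$ is hyperbolic, geodesic bigons are uniformly thin, so a bigon of length $n$ is readily seen to contain a ladder-like structure whose width is bounded by a constant $m_0=m_0(\delta)$ and whose height is $n$. Applying Theorem~\ref{LadderTheorem} with $m=m_0$ bounds the height by a constant $N=N(k,m_0)$: every geodesic bigon in $\mathrm{Cay}(G,S)$ has length at most $N$.

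The second step is an extension mechanism. Given a bigon $(\alpha,\beta)$ from $1$ to $w$ and any generator $s$ with $|ws|=|w|+1$, the words $\alpha s$ and $\beta s$ each have length $|w|+1=d(1,ws)$ and so are both geodesic, while $\alpha s\neq\beta s$; the symmetric statement holds for left extensions $t\alpha,\,t\beta$ when $w$ has a geodesic predecessor. Hence any bigon that is extendable on either side yields a strictly longer bigon, and iterating would manufacture bigons of length exceeding $N$, contradicting the previous step. Since the length strictly increases at each extension and is capped by $N$, the process must halt, so the existence of any bigon forces the existence of a \emph{trapped} bigon: one whose common endpoint admits neither a geodesic continuation on the right nor a geodesic predecessor on the left. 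The conjecture thereby reduces to the single claim that an infinite hyperbolic $k$-geodetic Cayley graph contains no trapped bigon.

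Eliminating trapped bigons is the hard, still-open heart of the matter and the main obstacle. A trapped bigon forces its endpoint to be a two-sided dead end carrying a pair of distinct geodesics, and controlling dead ends for an \emph{arbitrary} $k$-geodetic generating set is delicate. The natural route is to exploit the strong structure furnished by Theorem~\ref{TheTheorem}---$G$ is virtually-free with infinite cyclic centralisers---to relocate a geodesic lying in a bigon, via the group action and the recurrence of cone types along the periodic quasi-axis of an infinite-order element, onto a bi-infinite geodesic, where it extends indefinitely. Making this relocation rigorous, so that the two geodesics remain distinct and simultaneously geodesic across the join, is precisely the step I cannot yet complete in general. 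Infiniteness is essential here: in a finite group the extension process terminates at genuine two-sided dead ends, which is why the ``furthermore'' clause is restricted to infinite $G$. For $k=1$ the analogous conclusion is delivered by the Church--Rosser/Madlener--Otto machinery cited above, and the difficulty is exactly that this machinery does not obviously survive the passage to $k>1$.
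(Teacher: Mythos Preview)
The statement you are attempting to prove is labelled \emph{Conjecture} in the paper, not a theorem: the authors do not prove it, and there is no proof in the paper to compare your attempt against. Your own write-up is consistent with this---you explicitly say the elimination of trapped bigons is ``the hard, still-open heart of the matter'' and ``the step I cannot yet complete''. So what you have submitted is not a proof but a partial strategy together with an honest acknowledgement of where it breaks down.

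On the substance of your reductions: the observation that non-degenerate bigons in a hyperbolic $k$-geodetic Cayley graph have bounded length is correct and is in fact Lemma~\ref{boundonbigonsandtriangles} of the paper, so your ``first substantive step'' is already established. Your extension mechanism is also sound: appending (or prepending) a length-increasing generator to both sides of a bigon yields a strictly longer bigon, and since length is capped this terminates at a bigon whose endpoint is a two-sided dead end. The genuine gap is exactly where you locate it. Ruling out such trapped bigons for an arbitrary $k$-geodetic generating set is not handled by anything in the paper; Theorem~\ref{TheTheorem} constrains the isomorphism type of $G$ (virtually free, cyclic centralisers) but says nothing that excludes dead ends, and the Madlener--Otto machinery you allude to genuinely relies on confluence and does not transfer to $k>1$. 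Your proposed ``relocation onto a bi-infinite geodesic via cone types'' is a plausible line of attack, but as written it is a hope rather than an argument, and the paper offers no tool that would close it. In short: your reductions are fine, your identification of the obstacle is accurate, and the obstacle is why the statement remains a conjecture.
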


We note the difference between finite and infinite groups in the above conjecture. For any finite group $G$, it is clear that $\text{Cay}(G, G \setminus \{1_G\})$ is geodetic.
For any positive integer $k$, the only infinite $k$-geodetic Cayley graphs we know are in fact geodetic. 

\begin{example}\label{kgeodeticeg}
For any integer $k > 1$, we give an example of a group $G$ and generating set $S$ such that $\text{Cay}(G,S)$ is $k$-geodetic but not $(k-1)$-geodetic as follows. We observe that the complete bipartite graph $K_{k,k}$ is $k$-geodetic and not $(k-1)$-geodetic. We now choose a group and generating set with Cayley graph $K_{k,k}$. Let $G$ be the cyclic group of order $2k$, let $a$ be an order $2k$ element in $G$ and define $S:=\{a^{2i+1}\mid 0\leq i\leq k-1\}$. Then $\text{Cay}(G,S)$ has $k$ distinct geodesics of length $2$ for each $a^{2i}\in G$ with $1 \leq i\leq k$ and a unique geodesic of length $1$ for each $a^{2i+1}\in G$ with $0\leq i\leq k-1$.
\end{example}

\section{Preliminaries}

Let $X=(V,E)$ be a locally-finite simple connected graph. For $a,b\in\mathbb{N}$ with $a\leq b$, define $[a,b]$ to be $\{a,a+1,\dots,b\}$. A \emph{path} in $X$ is a map $\gamma :[0,n]\to V$ with $\{v_i,v_{i+1}\} \in E$ for each $0\leq i\leq n-1$. The path $\gamma$ has an \emph{initial point}, \emph{end point} and \emph{length} given by $\gamma(0)$, $\gamma(n)$ and $n$ respectively.

There is a metric $d_X\colon V\times V \to \mathbb{N}$ such that $d_X(u, v)$ is the length of a minimal length path between $u$ and $v$. We call such a path a \emph{geodesic}. We say that $X$ is \emph{$k$-geodetic} if for any pair of vertices the number of distinct geodesics between them is less than or equal to $k$. In the special case that $k = 1$, we say that $X$ is \emph{geodetic}. For our arguments that follow, we will require precise notions relating to fellow travelling.

\begin{definition}\label{fellowtravel}
Let $\gamma_i\colon[0,n_i]\to V$ for $i\in\{1,2\}$ be paths in $V$ and $n=\max\{n_1,n_2\}$. Then
the paths are said to \emph{$m$-fellow travel} if \[d_X(\gamma_1(t),\gamma_2(t))\in[0,m],\] for all $t\in [0,n]$. Note that if $n_i<n$, we define $\gamma_i(t)=\gamma_i(n_i)$ for all $t \in [n_i+1,n]$.
\end{definition}

\begin{definition}\label{allthedefinitions}
Let $\gamma_1,\gamma_2\colon[0,n]\to V$ be paths of length $n$. For a given $m>0$, we say $\gamma_1$ and $\gamma_2$ are:
\begin{itemize}
        \item[(i)] \emph{$m$-apart at $i\in[0,n]$} if $d_X(\gamma_1(i),\gamma_2(i))=m$; 
            \item[(ii)] \emph{$m$-close at $i\in[0,n]$} if $ d_X(\gamma_1(i),\gamma_2(i))\in[1,m]$;
    \item[(iii)]  \emph{asynchronously disjoint} if for all distinct $i,j\in[0,n]$ we have $\gamma_1(i) \neq \gamma_2(j)$;
    \item[(iv)] \emph{co-travelling} if $\gamma_{1}(i)=\gamma_{2}(j)$ and $\gamma_{1}(i+1)=\gamma_{2}(j+1)$ for some $i,j \in [0,n-1]$, and \emph{synchronously co-travelling} if $i=j$.
\end{itemize}
Furthermore, we define $a_m(\gamma_1,\gamma_2):=|\{i\in[0,n] \mid d_X(\gamma_1(i),\gamma_2(i))=m \}|$ and $c_m(\gamma_1,\gamma_2):=|\{i\in[0,n] \mid d_X(\gamma_1(i),\gamma_2(i))\in [1,m] \}|$; so $a_m(\gamma_1, \gamma_2)$ records the number of times that $\gamma_1$ and $\gamma_2$ are $m$-apart, while $c_m(\gamma_1, \gamma_1)$ records the number of times they are $m$-close.
\end{definition}

\begin{definition}
A \emph{geodesic triangle} in $X$ is the union of three geodesic paths $\alpha\colon [0,n_\alpha]\to V$, $\beta\colon [0,n_\beta]\to V$ and $\gamma\colon [0,n_\gamma]\to V$, such that $\alpha(n_\alpha)=\beta(0), \beta(n_\beta)=\gamma(0)$ and $\gamma(n_\gamma)=\alpha(0)$. The geodesic triangle is \emph{non-degenerate} if $\alpha(a),\beta(b),\gamma(c)$ are pairwise distinct for all $a\in[1,n_\alpha]$, $b\in[1,n_\beta]$ and $c\in[1,n_\gamma]$; otherwise it is \emph{degenerate}.
\end{definition}

\begin{definition}
A \emph{geodesic bigon} in $X$ is the union of two geodesic paths $\alpha\colon [0,n]\to V$ and $\beta\colon [0,n]\to V$ such that $\alpha(0)=\beta(0)$ and $\beta(n)=\gamma(n)$. The geodesic bigon is \emph{non-degenerate} if $\alpha(i)\neq \beta(i)$ for all $i\in[1,n-1]$; otherwise it is \emph{degenerate}.
\end{definition}

Let $G$ be a group and $S\subseteq G \setminus \{1_G\}$ a finite inverse-closed generating set. The undirected Cayley graph of $G$ with respect to $S$, denoted $\text{Cay}(G,S)$ is the graph with vertex set $G$ and edge set $\{\{g, h\} \in G \times G \mid g^{-1} h \in S\}$. Since $S$ generates $G$, $\text{Cay}(G,S)$ is connected. Since $S$ is finite, $\text{Cay}(G,S)$ is locally-finite. Since $1_G \not \in S$ and $S\subset G$, $\text{Cay}(G,S)$ is simple. We call $S$ an \emph{alphabet} and denote the set of finite words over the alphabet $S$ by $S^\ast$. We write $|u|$ for the length of the word $u\in S^\ast$; the unique word of length 0 is called the empty word and denoted $\lambda$. Let $S^+:=S^\ast\setminus\{\lambda\}$. For any $w=w_1w_2\dots w_n \in S^\ast$, a word of the form $w_iw_{i+1}\dots w_j$ with $1\leq i\leq j\leq n$ is called a \emph{factor} of $w$. A word $w\in S^+$ is called \emph{primitive} if there is no word $u\in S^\ast$ such that $w=u^m$ for some $m>1$. If a word $w$ is not primitive, then we call the minimal length word $u$ such that $w=u^m$ for some $m>1$ the \emph{primitive root} for $w$. For any $g\in G$, we write $|g|_{G,S}$ for the length of a shortest word $w \in S^\ast$ such that $w$ spells $g$. For every $u \in G$, there is a bijective correspondence between paths in $\text{Cay}(G,S)$ with initial vertex $u$ and words in $S^\ast$; minimal length words spelling a group element $g$ correspond to geodesic paths in $\text{Cay}(G,S)$ from $u$ to $ug$. We write $u=v$ if $u,v\in S^\ast$ are identical as words. We use use the symbol $\equiv$ to denote that the left hand side and right hand side evaluate to the same element in $G$. For any $g\in G$ and $r>0$, we write $B_{r}(g)$ for the set $\{h\in G \mid d_{X}(g,h)<r\}$. The centraliser of an element $g\in G$ is defined to be $C_G(g):=\{h\in G \mid gh\equiv hg \}$.

We refer the reader to \cite{MetricSpacesofNon-PositiveCurvature} for basic definitions and results regarding hyperbolic geodesic metric spaces. A locally-finite simple connected graph $X$ is a geodesic metric space. Let $T$ be a geodesic triangle in $X$ with vertices $T_1,T_2$ and $T_3$ and sides $\gamma_1,\gamma_2$ and $\gamma_3$.
\begin{figure}[h]
\centering
\begin{tikzpicture}
\draw (0, 0) to [bend right=20] (2, 2);
\draw (2, 2) to [bend right=20] (4, 0);
\draw (0,0) to [bend left=20] (4,0);
\node at (0,0)[circle,fill,inner sep=1pt]{};
  \node at (2,2)[circle,fill,inner sep=1pt]{};
  \node at (4,0)[circle,fill,inner sep=1pt]{};
\node at (2.63,1)[circle,fill,inner sep=1pt]{};
\node at (2,0.38)[circle,fill,inner sep=1pt]{};
\node at (1.37,1)[circle,fill,inner sep=1pt]{};
\node at (-0.2,0) {${T_1}$};
\node at (2,2.2) {${T_2}$};
\node at (4.2,0) {${T_3}$};
\node at (3,1) {${p_1}$};
\node at (2,0) {${p_2}$};
\node at (1,1) {${p_3}$};
\draw (2.63,1) to  (2,0.38);
\draw (2.63,1) to  (1.37,1);
\draw (1.37,1) to (2,0.38);
\end{tikzpicture}
\caption{\small Hyperbolic space has $\delta$-thin geodesic triangles}
\end{figure}
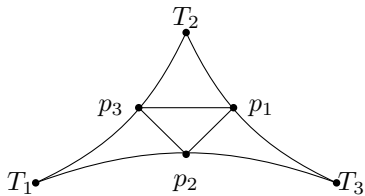
Take the points $p_i$ on each $\gamma_i$ to be those that have \[d_X(T_1,p_2)=d_X(T_1,p_3), d_X(T_2,p_1)=d_X(T_2,p_3), d_X(T_3,p_1)=d_X(T_3,p_2).\] For a real number $\delta>0$, we say $T$ is \emph{$\delta$-thin} if for each $i\in\{1,2,3\}$ and distinct $j,k\in\{1,2,3\}\setminus\{i\}$ the sub-paths of $\gamma_j$ and $\gamma_k$ from $T_i$ to $p_j$ and $p_k$ respectively $\delta$-fellow travel. We say $X$ is \emph{hyperbolic} if there exists $\delta>0$ such that all geodesic triangles are $\delta$-thin. We say a group $G$ is a \emph{hyperbolic group} if $\text{Cay}(G,S)$ is a hyperbolic for some (and hence any) finite generating set $S$. We have the well-known fellow traveller property in hyperbolic groups \cite[Lemma 2.3.2 and Thm. 3.4.5]{WordProcessingInGroups}:

\begin{proposition}\label{fellowtravellerproperty}
Let $G$ be a hyperbolic group and $X=\text{Cay}(G,S)$ for some finite generating set $S$. Then for any $c\geq 0$ there exists an $m_c>0$ such that any two geodesics $\gamma_i:[0,n_i]\to X$ with $i\in\{1,2\}$, $\gamma_1(0)=\gamma_2(0)$ and $d_X(\gamma_1(n_1),\gamma_2(n_2))\leq c$ will $m_c$-fellow travel.
\end{proposition}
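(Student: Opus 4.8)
The plan is to realise the two geodesics as two sides of a single geodesic triangle and to read off the conclusion from the $\delta$-thin hypothesis. Fix $\delta>0$ witnessing hyperbolicity of $X$, write $A=\gamma_1(0)=\gamma_2(0)$, $B=\gamma_1(n_1)$, $C=\gamma_2(n_2)$, and (using connectedness) choose any geodesic $\sigma$ from $B$ to $C$; its length is $d:=d_X(B,C)\le c$. Then $\gamma_1$, $\gamma_2$ and $\sigma$ bound a geodesic triangle $T$ with vertices $A,B,C$, in which $\gamma_1$ and $\gamma_2$ are exactly the two sides emanating from $A$. The internal point attached to $A$ lies at distance $s_0:=\tfrac{1}{2}(n_1+n_2-d)$ from $A$ along each of $\gamma_1$ and $\gamma_2$, and one checks from the triangle inequality that $0\le s_0\le\min\{n_1,n_2\}$.

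First I would extract the bound on the initial segments. Applying the $\delta$-thin hypothesis at the vertex $A$, the sub-path of $\gamma_1$ from $A$ to its internal point and the sub-path of $\gamma_2$ from $A$ to its internal point $\delta$-fellow travel. Since both sub-paths issue from the common point $A$ and are parametrised by arc length, this is precisely synchronous fellow travelling, giving $d_X(\gamma_1(t),\gamma_2(t))\le\delta$ for every integer $t\in[0,s_0]$.

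Next I would control the two tails, namely the portions of $\gamma_1$ and $\gamma_2$ for $t\in[s_0,n]$ with $n=\max\{n_1,n_2\}$, recalling the convention from Definition \ref{fellowtravel} that each $\gamma_i$ is held constant beyond $n_i$. The point is that these tails are short: the tail of $\gamma_1$ has length $n_1-s_0=\tfrac{1}{2}(n_1-n_2+d)$ and the tail of $\gamma_2$ has length $n_2-s_0=\tfrac{1}{2}(n_2-n_1+d)$, and since $|n_1-n_2|=|d_X(A,B)-d_X(A,C)|\le d_X(B,C)=d\le c$, each tail has length at most $c$. Hence, anchoring at $s_0$ and applying the triangle inequality, for $t\in[s_0,n]$ we obtain
\[
d_X(\gamma_1(t),\gamma_2(t))\le d_X(\gamma_1(t),\gamma_1(s_0))+d_X(\gamma_1(s_0),\gamma_2(s_0))+d_X(\gamma_2(s_0),\gamma_2(t))\le c+\delta+c.
\]
Combining the two ranges shows the geodesics $m_c$-fellow travel with $m_c:=2c+\delta$, a constant depending only on $c$ and $\delta$.

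The main obstacle I anticipate is bookkeeping rather than conceptual. One must (i) align the estimate with the paper's insize formulation of $\delta$-thinness, confirming that the relevant sub-paths are exactly those emanating from the common basepoint $A$, so that thinness delivers genuinely synchronous fellow travelling rather than a weaker Hausdorff closeness; and (ii) account for $s_0$ possibly being a half-integer, so that the internal point lies in the interior of an edge. Restricting attention to integer parameters (vertices) and anchoring at the largest integer $\le s_0$ enlarges each tail length by at most one, which is harmlessly absorbed into $m_c$ (e.g.\ $m_c:=2c+\delta+2$). The interaction with the constant-extension convention when $n_1\ne n_2$ also needs a quick check, but the tail estimate above is already phrased so as to cover it.
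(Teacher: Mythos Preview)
Your argument is correct and is precisely the standard derivation of the fellow traveller property from the $\delta$-thin (insize) formulation of hyperbolicity: form the triangle on $\gamma_1,\gamma_2,\sigma$, use thinness at the shared vertex to control $[0,s_0]$ synchronously, and bound the short tails by the triangle inequality. Your bookkeeping for the half-integer $s_0$ and the constant-extension convention is also handled properly.

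There is nothing to compare against in the paper itself: the authors do not prove this proposition but simply cite it as well known, referring to \cite[Lemma 2.3.2 and Thm.\ 3.4.5]{WordProcessingInGroups}. Your proof is exactly the kind of direct argument those references encode.
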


\section{Ladder-like structures are bounded}

We will show that in a $k$-geodetic graph, there is a bound on the number of times a pair of asynchronously disjoint geodesics may be $m$-apart and $m$-close.

\begin{lemma}\label{OneOrTwoLess}
Let $X$ be a $k$-geodetic graph and let $u, v$ be vertices in $X$.  If there exist distinct paths $\alpha_0, \dots, \alpha_k\colon [0,n]\to X$ with initial point $u$ and terminal point $v$, then there exists a path $\beta$ from $u$ to $v$ of length $n-1$ or $n-2$.
\end{lemma}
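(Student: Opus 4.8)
The plan is to avoid constructing $\beta$ by locally rerouting the $\alpha_i$ (a tempting ``merge two paths'' approach that, for $k \geq 2$, fails because two equal-length paths between a pair of vertices need not force any shortcut in a $k$-geodetic graph). Instead I would exploit all $k+1$ paths at once through a short counting argument, and then repair the length by padding.

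First I would argue that $d_X(u,v) \leq n-1$. Every path has length at least the distance between its endpoints, so $d_X(u,v) \leq n$. If equality held, then each of $\alpha_0,\dots,\alpha_k$ would be a geodesic from $u$ to $v$; since they are pairwise distinct, this would exhibit $k+1$ distinct geodesics between $u$ and $v$, contradicting the hypothesis that $X$ is $k$-geodetic. Hence $d_X(u,v) \leq n-1$. Along the way one records the trivial edge cases: distinctness of the $\alpha_i$ forces $n \geq 1$ (otherwise all are the constant path at $u$), and if $u=v$ then $d_X(u,v)=0 \leq n-1$ still holds.

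Next I would fix a geodesic $\sigma$ from $u$ to $v$ of length $d := d_X(u,v) \leq n-1$, and let $w$ be any neighbour of $v$ (which exists because some $\alpha_i$ reaches $v$ along an edge, as $n \geq 1$). Appending the detour $v \to w \to v$ to a path ending at $v$ increases its length by $2$ without changing its endpoints, so from $\sigma$ I can realise, by a genuine path from $u$ to $v$, every length in $\{d, d+2, d+4, \dots\}$. The target lengths $n-1$ and $n-2$ have opposite parities, so exactly one of them, say $L$, satisfies $L \equiv d \pmod 2$; I would take $\beta$ to be $\sigma$ padded by such detours to length $L$.

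The one genuinely delicate point is verifying that this $L$ is attainable, i.e.\ that $L \geq d$ as well as $L \equiv d \pmod 2$. The parity is automatic by the choice of $L$. For the inequality: if $L = n-1$ then $L \geq d$ follows directly from $d \leq n-1$; if instead $L = n-2$, then $d \not\equiv n-1 \pmod 2$ forces $d \neq n-1$, whence $d \leq n-2 = L$. In either case $\beta$ has length exactly $n-1$ or $n-2$, as required. I expect the main obstacle to be purely bookkeeping --- getting this parity-and-inequality case split right, and confirming that the paper's definition of ``path'' permits repeated vertices so that the padding detour is legitimate --- rather than anything structural.
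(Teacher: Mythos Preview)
Your proof is correct and genuinely different from the paper's. Both arguments begin identically: the $k+1$ distinct length-$n$ paths cannot all be geodesics in a $k$-geodetic graph, so $d_X(u,v)\leq n-1$. From there the paper performs a \emph{local} repair: it locates the least $i_0$ at which the prefix $\alpha_0\restrict{[0,i_0]}$ fails to be geodesic, replaces that prefix by a geodesic $\beta_0$ to $\alpha_0(i_0)$ (necessarily of length $i_0-1$ or $i_0-2$, by the minimality of $i_0$), and then keeps the tail $\alpha_0\restrict{[i_0,n]}$ unchanged. Your approach is \emph{global}: take a geodesic $\sigma$ all the way from $u$ to $v$ and then pad with $v\to w\to v$ detours to hit whichever of $n-1,n-2$ matches the parity of $d$; your parity-and-inequality case split is clean and correct. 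The paper's construction yields a $\beta$ that agrees with $\alpha_0$ on a suffix, which could in principle be useful downstream, but in the proof of Proposition~\ref{kboundedmapart} only the \emph{length} of $\beta$ is ever invoked, so your version integrates just as well. Your caveat about the definition of ``path'' is exactly right and is the only point where your argument is more fragile: the paper defines a path as a map $\gamma\colon[0,n]\to V$ with $\{\gamma(i),\gamma(i+1)\}\in E$, which permits repeated vertices, so your padding is legitimate here (and indeed the paper's concatenation $\beta_0\cdot\alpha_0\restrict{[i_0,n]}$ may also repeat vertices, so both proofs rely on this).
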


\begin{proof}
Since there are $k+1$ paths of length $n$, none of them can be geodesics. Consider the sequence of paths $\alpha_0\restrict{[0,i]}$ for $i\in [0,n]$. Let \[i_0:=\min\{i\in [0,n]  \mid \alpha_0\restrict{[0,i]} \ \text{is not a geodesic}\}.\] Define $\beta_0$ to be a geodesic from $u$ to $\alpha_0(i_0)$. Then $\beta_0$ has length $j$ for some $j\in[i_0-2,i_0-1]$, since a shorter path contradicts the minimality of $i_0$. Define a path $\beta$ by \[\beta(i):=\begin{cases} 
      \beta_0(i) & \text{for} \ i\in[0,j], \\
      \alpha_0(i+i_0-j) & \text{for} \ i\in[j+1,n-i_0+j]. \\
   \end{cases}\] Then $\beta$ is a path from $u$ to $v$ with length $n-1$ or $n-2$.
\end{proof}

\begin{definition}\label{lls}
Let $m$ and $r$ be positive integers. A \emph{ladder-like structure} of \emph{width} $m$ and  \emph{height} $r$ is a pair of asynchronously disjoint geodesics $\gamma_x$ and $\gamma_y$ with $a_m(\gamma_x,\gamma_y)=r$.
\end{definition}

\begin{proposition}\label{kboundedmapart}
Let $m$ and $k$ be positive integers. There exists a constant $A(m, k)$ such that no ladder-like structure of width $m$ has a height exceeding $A(m,k)$ in any $k$-geodetic graph.
\end{proposition}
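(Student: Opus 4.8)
The plan is to show that a sufficiently tall ladder-like structure manufactures more than $k$ distinct geodesics between a single pair of vertices, contradicting $k$-geodeticity. Write the structure as asynchronously disjoint geodesics $\gamma_x,\gamma_y\colon[0,n]\to X$ that are $m$-apart at indices $i_1<\cdots<i_r$, and let $\rho_c$ denote a geodesic rung of length $m$ joining $\gamma_x(i_c)$ to $\gamma_y(i_c)$. For $a\le c\le b$ I would form the \emph{crossing path} that follows $\gamma_x$ from $i_a$ to $i_c$, traverses $\rho_c$, and then follows $\gamma_y$ from $i_c$ to $i_b$. The key elementary observation is that, for fixed $a$ and $b$, each of these $b-a+1$ crossing paths has the \emph{same} length $L_{a,b}:=(i_b-i_a)+m$, and asynchronous disjointness makes them pairwise distinct. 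Hence if $L_{a,b}$ equalled the true distance $d_X(\gamma_x(i_a),\gamma_y(i_b))$ and $b-a\ge k$, we would obtain $k+1$ distinct geodesics between $\gamma_x(i_a)$ and $\gamma_y(i_b)$, a contradiction.

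The whole problem is thereby transferred to the \emph{excess} $E(a,b):=L_{a,b}-d_X(\gamma_x(i_a),\gamma_y(i_b))$. First I would record its basic properties, all immediate from the triangle inequality together with the fact that $\gamma_x$, $\gamma_y$ and each $\rho_c$ are geodesics: one has $0\le E(a,b)\le 2m$, $E(a,a)=0$, and $E$ is non-decreasing in $b$ and non-increasing in $a$. Thus $E$ is an integer-valued monotone function on the triangle $\{1\le a\le b\le r\}$ taking at most $2m+1$ values, and the goal reduces to the following: if $r$ exceeds a bound $A(m,k)$, then there exist $a<b$ with $b-a\ge k$ and $E(a,b)=0$.

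Next I would extract long constant-excess runs by pigeonhole. Fixing $a$ and letting $b$ grow, $E(a,\cdot)$ is a non-decreasing step function with at most $2m$ jumps, so by monotonicity some value is held on a \emph{consecutive} run of length at least $r/(2m+1)$. The useful feature of such a run is that the defining triangle inequalities become equalities, forcing the relevant ladder vertices to lie on a common geodesic: on a run where $E(a,\cdot)$ is constant, each $\gamma_y(i_b)$ lies on a single geodesic from $\gamma_x(i_a)$ to the vertex at the right end of the run. I would try to feed this rigidity, together with Lemma~\ref{OneOrTwoLess} applied to the (equal-length, strictly non-geodesic) crossing paths of a run, into a descent on the excess value: each step would pass from a long run at excess $v\ge 1$ to a still-long configuration at excess at most $v-1$, so that after at most $2m$ steps one reaches excess $0$ on a run of width at least $k$, producing the fatal $k+1$ geodesics. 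The constants compound multiplicatively across the $2m$ levels, which is what makes the resulting bound $A(m,k)$ grow with $m$.

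The step I expect to be the genuine obstacle is precisely this descent on the excess. Monotonicity and boundedness of $E$ alone are \emph{not} sufficient: one can exhibit an abstract monotone integer function with $E(a,b)=0$ only when $b-a$ is small (for instance $E(a,b)=\min\{2m,\lfloor (b-a)/k\rfloor\}$), so the conclusion must genuinely invoke $k$-geodeticity and not merely this combinatorial skeleton. Equally, one cannot shortcut the argument by bounding the number of crossing paths as ``near-geodesics'': in a $k$-geodetic graph the number of paths of length $d_X(u,v)+e$ can grow with $d_X(u,v)$, as one already sees for a geodesic segment carrying a triangle on each edge, which is $1$-geodetic yet has order $d_X(u,v)$ many paths of length $d_X(u,v)+1$. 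The real content is therefore to rule out a long run sitting at \emph{positive} excess: the rigidity that forces many rungs to abut a common geodesic must be converted, via Lemma~\ref{OneOrTwoLess}, into either a strict drop in excess or an outright surplus of geodesics. Making this conversion quantitative—so that the run remains long enough to survive all $2m$ levels—is where the care lies, and it is exactly what pins down the explicit form of $A(m,k)$.
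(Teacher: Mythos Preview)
Your setup is exactly right and matches the paper: the crossing paths of common length $L_{a,b}=(i_b-i_a)+m$, their pairwise distinctness from asynchronous disjointness, and your excess function $E(a,b)=L_{a,b}-d_X(\gamma_x(i_a),\gamma_y(i_b))$ with the bounds $0\le E\le 2m$ are precisely the invariants the paper's argument runs on (though the paper never names $E$). The ingredients---$k+1$ equal-length crossing paths, Lemma~\ref{OneOrTwoLess}, pigeonhole, and a multiplicative blow-up over roughly $2m$ stages---are also the same.

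The gap is the \emph{direction} of your iteration. You propose to \emph{descend} on the excess: from a long run at $E=v\ge 1$, apply Lemma~\ref{OneOrTwoLess} to the crossing paths and land at a still-long configuration with $E\le v-1$. But on such a run the crossing paths have length $L_{a,b}$ while the true distance is $L_{a,b}-v$, so the lemma's output---a path of length $L_{a,b}-1$ or $L_{a,b}-2$---is no shorter than the geodesic you already possess and yields nothing about pairs at smaller excess. Monotonicity makes this worse: the region $\{E\le v-1\}$ lies strictly closer to the diagonal, so there is no mechanism by which shortening a single path manufactures a \emph{long} sub-run there. Your own abstract counterexample $E(a,b)=\min\{2m,\lfloor (b-a)/k\rfloor\}$ is exactly the obstruction: the only place $E$ is small is where $b-a$ is small, and nothing in the descent touches that.

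The paper runs the iteration the other way, as an \emph{ascending} lower bound on $E(0,\cdot)$ until it collides with your upper bound $E\le 2m$. One application of Lemma~\ref{OneOrTwoLess} to the $k+1$ crossing paths from $x_0$ to $y_k$ already gives $E(0,k)\ge 1$. The key move you are missing is how to iterate: the paper does \emph{not} re-feed crossing paths into the lemma. Instead it repeats the first step on $2k+1$ disjoint blocks $[jk,(j+1)k]$, extends each resulting shortened path along $\gamma_x$ on the left and $\gamma_y$ on the right so that they all run from $x_0$ to $y_{k(2k+1)}$ with length $L_{0,k(2k+1)}-1$ or $L_{0,k(2k+1)}-2$, pigeonholes to find $k+1$ of these \emph{new} paths of equal length, and applies Lemma~\ref{OneOrTwoLess} to those, yielding $E(0,k(2k+1))\ge 2$. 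After $2m+1$ rounds one obtains $E(0,r)\ge 2m+1$ for $r=k\prod_{i=2}^{2m+1}(ik+1)$, contradicting $E\le 2m$. The multiplicative growth you anticipated is exactly this block-and-pigeonhole bookkeeping; the point you were missing is that at each stage the $k+1$ equal-length paths fed to the lemma are the \emph{extended shortened paths from the previous stage}, not the original crossings.
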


\begin{proof}
Let $k$ and $m$ be positive integers. Define $r:=k\prod_{i=2}^{2m+1}(ik+1)$ and $A(m,k):=mr$. Let $X$ be a $k$-geodetic graph. For contradiction, suppose there exist two asynchronously disjoint geodesics $\gamma_x$ and $\gamma_y$ in $X$ that form a ladder-like structure of width $m$ and height $A(m,k)+1$. For each $i\in[0,r]$, define the points $x_i$ on $\gamma_x$ and $y_i$ on $\gamma_y$ to be $(im+1)$-{th} occurrence of $\gamma_x$ and $\gamma_y$ being $m$-apart, ignoring all other occurrences that $\gamma_x$ and $\gamma_y$ are $m$-apart. Hence, there exists a diagram for $\gamma_x$ and $\gamma_y$ where each $d_i\geq m$ as depicted in Fig. \ref{fig:LL1}. The top row from $x_0$ to $x_r$ is a depiction of $\gamma_x$, the bottom row from $y_0$ to $y_r$ is a depiction of $\gamma_y$ and $d_i\geq m$ for each $i\in[1,r]$. For each $j\in[0,r]$, the path from $x_j$ to $y_j$ is a geodesic $\gamma_j$ of length $m$. The vertices in $\{x_0, \dots, x_r\} \cup \{y_0, \dots, y_r\}$ are pairwise disjoint: because $\gamma_x$ is a geodesic, $x_i = x_j$ if and only if $i = j$; because $\gamma_y$ is a geodesic, $y_i = y_j$ if and only of $i = j$; because $\gamma_x$ and $\gamma_y$ are asynchronously disjoint, $x_i \neq y_j$ for any $i, j$ such that $i \neq j$; because the ladder has width $m$, $d(x_i, y_i) = m > 0$ for any $i$. Furthermore, since $d_i\geq m$ for each $i\in[1,r]$ we have that $x_{i+1}$ does not lie on $\gamma_i$ for any $i$. For clarity in the arguments to follow, we schematically depict this part of the graph as shown in Fig. \ref{fig:LL2}.

\begin{figure}[ht]
    \centering
\begin{tikzpicture}
  \draw (1, 6.5) to[bend left] (3, 7);
  \draw (3, 7) to[bend right] (5, 6.5);
   \draw (5, 7) to (5.67, 6.25);
   \draw (5.67, 6.25) to (6.33, 6.25);
      \draw (6.33, 6.25) to (7,7);
\draw (7,7) sin (7.5,7.5) cos (8,7) sin (8.5,6.5) cos (9,7);
     \draw (5, 5.5) to (5.67, 6.25);
      \draw (6.33, 6.25) to (7,5.5);
 \draw (3,5.5) sin (3.5,6) cos (4,5.5) sin (4.5,5) cos (5,5.5); 
 \draw (7, 5.5) to (9, 5.5);

  \draw (1, 6) to (3, 5.5);
    \draw (10, 7) to[bend left] (12, 7);
  \draw (10, 5.5) to[bend right] (12, 5.5);
   \draw (9, 7)[dashed] to (10, 7);
  \draw (9, 5.5)[dashed] to (10, 5.5);
  \node at (1,7)[circle,fill,inner sep=1pt]{};
  \node at (1,5.5)[circle,fill,inner sep=1pt]{};
  \node at (3,7)[circle,fill,inner sep=1pt]{};
  \node at (3,5.5)[circle,fill,inner sep=1pt]{};
  \node at (5,7)[circle,fill,inner sep=1pt]{};
  \node at (5,5.5)[circle,fill,inner sep=1pt]{};
  \node at (7,7)[circle,fill,inner sep=1pt]{};
  \node at (7,5.5)[circle,fill,inner sep=1pt]{};
  \node at (9,7)[circle,fill,inner sep=1pt]{};
  \node at (9,5.5)[circle,fill,inner sep=1pt]{};
  \node at (10,7)[circle,fill,inner sep=1pt]{};
  \node at (10,5.5)[circle,fill,inner sep=1pt]{};
  \node at (12,7)[circle,fill,inner sep=1pt]{};
  \node at (12,5.5)[circle,fill,inner sep=1pt]{};
  \draw (1,5.5) to (1,7);
  \draw (3,5.5) to (3,7);
  \draw (5,5.5) to (5,7);
  \draw (7,5.5) to (7,7);
  \draw (9,5.5) to (9,7);
  \draw (10,5.5) to (10,7);
\draw (12,5.5) to (12,7);
\node at (1,7.2) {$x_0$};
\node at (3,7.2) {$x_1$};
\node at (5,7.2) {$x_2$};
\node at (6.9,7.2) {$x_3$};
\node at (9,7.2) {$x_4$};
\node at (9.9,7.2) {$x_{r-1}$};
\node at (12,7.2) {$x_r$};
\node at (1,5.3) {$y_0$};
\node at (3,5.3) {$y_1$};
\node at (5.1,5.3) {$y_2$};
\node at (7,5.3) {$y_3$};
\node at (9,5.3) {$y_4$};
\node at (10,5.3) {$y_{r-1}$};
\node at (12,5.3) {$y_r$};
\node at (2,7.3) {$d_1$};
\node at (4,6.7) {$d_2$};
\node at (6,6.7) {$d_3$};
\node at (8.1,7.3) {$d_4$};
\node at (11,7.1) {$d_{r}$};
\node at (2,5.5) {$d_1$};
\node at (4,5.2) {$d_2$};
\node at (6,5.9) {$d_3$};
\node at (8,5.3) {$d_4$};
\node at (11,5.5) {$d_{r}$};
\node at (0.8,6.25) {$m$};
\node at (2.8,6.25) {$m$};
\node at (4.8,6.25) {$m$};
\node at (6.8,6.25) {$m$};
\node at (8.8,6.25) {$m$};
\node at (9.8,6.25) {$m$};
\node at (11.8,6.25) {$m$};
\end{tikzpicture}
\caption{\small An example ladder-like structure of width $m$ and height $h$.}
    \label{fig:LL1}
\end{figure}
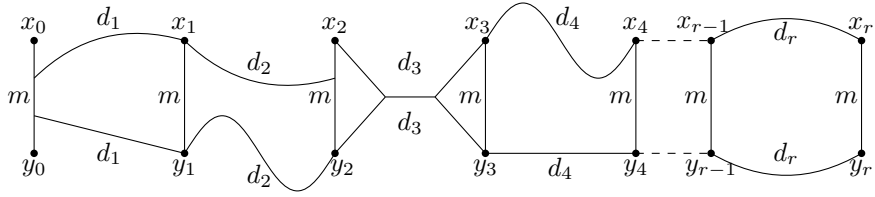

\begin{figure}[ht]
    \centering
\begin{tikzpicture}
  \draw (1, 7) to (9, 7);
  \draw (1, 5.5) to (9, 5.5);
    \draw (10, 7) to (12, 7);
  \draw (10, 5.5) to (12, 5.5);
   \draw (9, 7)[dashed] to (10, 7);
  \draw (9, 5.5)[dashed] to (10, 5.5);
  \node at (1,7)[circle,fill,inner sep=1pt]{};
  \node at (1,5.5)[circle,fill,inner sep=1pt]{};
  \node at (3,7)[circle,fill,inner sep=1pt]{};
  \node at (3,5.5)[circle,fill,inner sep=1pt]{};
  \node at (5,7)[circle,fill,inner sep=1pt]{};
  \node at (5,5.5)[circle,fill,inner sep=1pt]{};
  \node at (7,7)[circle,fill,inner sep=1pt]{};
  \node at (7,5.5)[circle,fill,inner sep=1pt]{};
  \node at (9,7)[circle,fill,inner sep=1pt]{};
  \node at (9,5.5)[circle,fill,inner sep=1pt]{};
  \node at (10,7)[circle,fill,inner sep=1pt]{};
  \node at (10,5.5)[circle,fill,inner sep=1pt]{};
  \node at (12,7)[circle,fill,inner sep=1pt]{};
  \node at (12,5.5)[circle,fill,inner sep=1pt]{};
  \draw (1,5.5) to (1,7);
  \draw (3,5.5) to (3,7);
  \draw (5,5.5) to (5,7);
  \draw (7,5.5) to (7,7);
  \draw (9,5.5) to (9,7);
  \draw (10,5.5) to (10,7);
\draw (12,5.5) to (12,7);
\node at (1,7.2) {$x_0$};
\node at (3,7.2) {$x_1$};
\node at (5,7.2) {$x_2$};
\node at (7,7.2) {$x_3$};
\node at (9,7.2) {$x_4$};
\node at (10,7.2) {$x_{r-1}$};
\node at (12,7.2) {$x_r$};
\node at (1,5.3) {$y_0$};
\node at (3,5.3) {$y_1$};
\node at (5,5.3) {$y_2$};
\node at (7,5.3) {$y_3$};
\node at (9,5.3) {$y_4$};
\node at (10,5.3) {$y_{r-1}$};
\node at (12,5.3) {$y_r$};
\node at (2,7.2) {$d_1$};
\node at (4,7.2) {$d_2$};
\node at (6,7.2) {$d_3$};
\node at (8,7.2) {$d_4$};
\node at (11,7.2) {$d_{r}$};
\node at (2,5.3) {$d_1$};
\node at (4,5.3) {$d_2$};
\node at (6,5.3) {$d_3$};
\node at (8,5.3) {$d_4$};
\node at (11,5.3) {$d_{r}$};
\node at (0.8,6.25) {$m$};
\node at (2.8,6.25) {$m$};
\node at (4.8,6.25) {$m$};
\node at (6.8,6.25) {$m$};
\node at (8.8,6.25) {$m$};
\node at (9.8,6.25) {$m$};
\node at (11.8,6.25) {$m$};
\end{tikzpicture}
\caption{\small A schematic ladder-like structure of width $m$ and height $h$}
    \label{fig:LL2}
\end{figure}
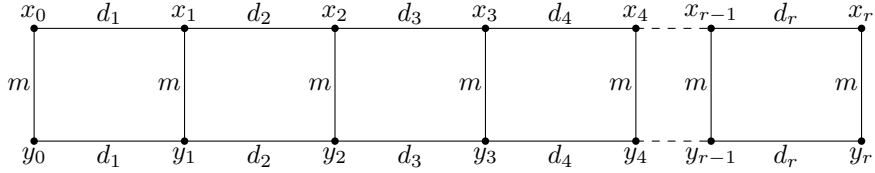

In this paragraph we demonstrate the existence of a `short' path from $x_0$ to $y_k$.  For each $j \in [0,k]$ we define a path $\alpha_j$ from $x_0$ to $y_k$ as follows: $\alpha_j$ travels along $\gamma_x$ from $x_0$ to $x_j$, then travels along $\gamma_j$ to $y_j$, and finally travels along $\gamma_y$ to $y_k$.  Although the paths $\alpha_0, \dots, \alpha_j$ are not necessarily internally disjoint, they are distinguished by which of the points $\{x_0, \dots, x_r\}\cup \{y_0, \dots, y_r\}$ they visit. Hence, we have exhibited $k+1$ distinct paths from $x_0$ to $y_k$ of length $m+\sum_{i=1}^{k}d_i$. By Lemma \ref{OneOrTwoLess}, there is a path $\beta$ from $x_0$ to $y_k$ such that \[|\beta| = m-1+\sum_{i=1}^{k}d_i \ \text{or} \ |\beta| = m-2+\sum_{i=1}^{k}d_i.\] Since $\gamma_x$ and $\gamma_k$ are geodesics, any path from $x_0$ to $y_k$ that passes through $x_k$ has length at least $m + \sum_{i=1}^{k}d_i$; hence $\beta$ does not pass through $x_k$. 

For each $j\in[1,2k]$, we repeat the above argument for paths from $x_{jk}$ to $y_{(j+1)k}$. We deduce that for each $j\in[0,2k]$, there is a path from $x_{jk}$ to $y_{(j+1)k}$ that does not pass through $x_{(j+1)k}$ and has length \[m-1+\sum_{i=jk+1}^{(j+1)k}d_i \ \text{or} \ m-2+\sum_{i=jk+1}^{(j+1)k}d_i.\] For each $j\in[0,2k]$, extend these paths so that their initial vertex is $x_0$, travelling along $\gamma_x$ to $x_{j}$, and their terminal vertex is $y_{k(2k+1)}$, travelling along $\gamma_y$ from $y_{(j+1)k}$. By the pigeonhole principle, at least $k+1$ of the extended paths have the same length. Since $X$ is $k$-geodetic, by Lemma \ref{OneOrTwoLess} there is a path from $x_0$ to $y_{k(2k+1)}$ of length \[m-p+\sum_{i=1}^{k(2k+1)}d_i \] with $p\in[2,4]$. Continuing these arguments we see that there is a path of length \[m-p+\sum_{i=1}^{f(c)}d_i\]
from $x_0$ to $y_{f(c)}$ for some $p\in[c,2c]$, where $f(c)\:=k\prod_{i=2}^{c}(ik+1)$. By our assumption we can take $c=2m+1$, which gives $p>2m$. This implies the existence of a path from $x_0$ to $x_r$ going via $y_r$ that is shorter than travelling along the geodesic $\gamma_x$. We have a contradiction showing the ladder-like structure with width $m$ cannot have height exceeding $A(m,k)$.
\end{proof}

This completes the proof of Theorem \ref{LadderTheorem}.

\begin{corollary}\label{kboundedmclose}
Let $m$ and $k$ be positive integers. There exists a constant $C(m,k)$ such that asynchronously disjoint geodesics cannot be $m$-close more than $C(m,k)$ times in any $k$-geodetic graph.
\end{corollary}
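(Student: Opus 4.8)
The plan is to recognise that being $m$-close is nothing more than being $j$-apart for some $j \in [1,m]$, and then to reduce the claim to $m$ applications of Proposition \ref{kboundedmapart}. Fix a pair of asynchronously disjoint geodesics $\gamma_x, \gamma_y \colon [0,n] \to X$. By Definition \ref{allthedefinitions}, the index $i$ is counted by $c_m(\gamma_x, \gamma_y)$ exactly when $d_X(\gamma_x(i), \gamma_y(i)) \in [1,m]$, and this happens if and only if $d_X(\gamma_x(i), \gamma_y(i)) = j$ for a unique $j \in [1,m]$. Hence the set $\{i \in [0,n] \mid d_X(\gamma_x(i), \gamma_y(i)) \in [1,m]\}$ is the disjoint union over $j \in [1,m]$ of the sets $\{i \in [0,n] \mid d_X(\gamma_x(i), \gamma_y(i)) = j\}$, giving
\[
c_m(\gamma_x, \gamma_y) = \sum_{j=1}^{m} a_j(\gamma_x, \gamma_y).
\]

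Next I would bound each summand. For each fixed $j \in [1,m]$, the pair $\gamma_x, \gamma_y$ consists of asynchronously disjoint geodesics for which $a_j(\gamma_x, \gamma_y)$ records the number of times they are $j$-apart; that is, $\gamma_x$ and $\gamma_y$ constitute a ladder-like structure of width $j$ and height $a_j(\gamma_x, \gamma_y)$ in the sense of Definition \ref{lls}. Proposition \ref{kboundedmapart} therefore applies with width parameter $j$ and yields $a_j(\gamma_x, \gamma_y) \leq A(j,k)$. Summing over $j$ and setting
\[
C(m,k) := \sum_{j=1}^{m} A(j,k),
\]
we obtain $c_m(\gamma_x, \gamma_y) \leq C(m,k)$, as required.

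I do not expect a genuine obstacle here: the substance of the statement is already carried by Proposition \ref{kboundedmapart}, and the corollary follows once one observes that the $m$-close count decomposes as the finite sum of the $j$-apart counts for $j \in [1,m]$. The only point that warrants care is that Proposition \ref{kboundedmapart} is phrased for a single width value, so one must invoke it separately for each $j \in [1,m]$ rather than attempting a single application; the asynchronous disjointness hypothesis is inherited unchanged by every such application, since it is a property of the pair $\gamma_x, \gamma_y$ and is independent of the width parameter.
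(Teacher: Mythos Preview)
Your proof is correct and follows essentially the same approach as the paper: both deduce the corollary by decomposing the $m$-close count into the $j$-apart counts for $j\in[1,m]$ and applying Proposition~\ref{kboundedmapart} to each width separately. The paper phrases this via the pigeonhole principle and records the slightly coarser bound $C(m,k)<mA(m,k)+1$, whereas you sum the individual bounds to get $C(m,k)=\sum_{j=1}^{m}A(j,k)$; the difference is cosmetic.
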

 
\begin{proof}
The result follows directly from Proposition \ref{kboundedmapart} and the pigeonhole principle; giving a constant $C(m,k)<mA(m,k)+1$ bounding how many times asynchronously disjoint geodesics can be $m$-close.
\end{proof}

\section{Hyperbolic $k$-geodetic groups are virtually-free}

We will now focus on hyperbolic groups with $k$-geodetic Cayley graphs, with the key result being that they are virtually-free.

We will use a characterisation of virtually-free groups as seen in \cite{GilmanHermillerHoltRees}. Let $e>0$, then a language $L$ over an alphabet $S$ is \emph{$e$-locally excluding} over $S$ if there exists a finite set $F$ of words of length at most $e$ such that any word not in $L$ has a factor in $F$. Then $G$ is virtually-free if and only if there exists a finite inverse-closed generating set $S$ such that the language of geodesics is $e$-locally excluding over $S$ for some $e>0$.

\begin{proposition}\label{prop:VFPart}
Let $k$ be a positive integer. Any hyperbolic $k$-geodetic group $G$ is virtually-free.
\end{proposition}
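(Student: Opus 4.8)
The plan is to apply the characterisation of virtually-free groups stated just above: it suffices to exhibit a finite inverse-closed generating set $S$ and a positive integer $e$ such that the language of geodesics in $\text{Cay}(G,S)$ is $e$-locally excluding. Since $G$ is $k$-geodetic we already have a generating set $S$ with $\text{Cay}(G,S)$ being $k$-geodetic, and we will use this $S$ throughout. The content of the proof is therefore to produce, from the geometry of the Cayley graph, a \emph{finite} set $F$ of short words with the property that every non-geodesic word contains a factor from $F$. Equivalently (and this is the formulation I would actually prove), I would show there is a uniform bound $e$ such that \emph{any} non-geodesic word $w$ has a factor of length at most $e$ that is itself non-geodesic; taking $F$ to be the set of all non-geodesic words of length at most $e$ then does the job, and this set is finite because $S$ is finite.

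First I would invoke hyperbolicity to get fellow travelling under control. Let $\delta$ be a thinness constant for $\text{Cay}(G,S)$, and apply Proposition \ref{fellowtravellerproperty} with $c=1$ to obtain a constant $M:=m_1$ such that any two geodesics with a common initial vertex whose endpoints are at distance at most $1$ must $M$-fellow travel. The strategy is to argue by contradiction: suppose no uniform $e$ works, so that for arbitrarily large $N$ there is a non-geodesic word $w$ all of whose factors of length at most $N$ are geodesic. Take such a $w$ of minimal length that is non-geodesic; then every proper factor is geodesic, and in particular every prefix and suffix just short of the full word spells a geodesic. I would then compare the path traced by $w$ with a genuine geodesic $\sigma$ joining the same endpoints. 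Because $w$ is non-geodesic but all its long prefixes are geodesic, the two paths stay close and run essentially parallel for a long stretch, and here is where the $k$-geodetic hypothesis bites.

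The key step, and the one I expect to be the main obstacle, is to convert this long parallel stretch into a \emph{ladder-like structure} of a fixed width and unboundedly large height, contradicting Theorem \ref{LadderTheorem} (equivalently Proposition \ref{kboundedmapart}). The fellow travelling from the previous paragraph forces the word-path and the geodesic $\sigma$ to stay within distance $M$ of one another; the subtlety is that Definition \ref{lls} requires the two geodesics to be \emph{asynchronously disjoint} and to be $m$-\emph{apart} (at distance exactly $m$) many times, whereas a priori the paths might repeatedly touch or synchronously co-travel. So the real work is a cleaning-up argument: excise the places where the paths meet or co-travel, splitting the configuration into maximal asynchronously disjoint subpaths, and observe that at least one such subpath must be long (since the total length grows with $N$ while the number of excisions is controlled). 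On such a subpath the distance between corresponding points is bounded by $M$ and bounded below by $1$, so by pigeonhole it equals some fixed value $m\in[1,M]$ at a number of positions that grows without bound as $N\to\infty$. This yields a ladder-like structure of width $m$ and arbitrarily large height, which Theorem \ref{LadderTheorem} forbids. The contradiction establishes the uniform bound $e$, and hence the language of geodesics is $e$-locally excluding and $G$ is virtually-free.
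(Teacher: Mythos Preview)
Your proposal is correct and follows essentially the same approach as the paper: both compare the geodesic prefix of a minimal non-geodesic word with a genuine geodesic to the same (or nearby) endpoint, use Proposition~\ref{fellowtravellerproperty} with $c=1$ to force $M$-fellow travelling, and then invoke the ladder bound to cap the length of the disagreeing tail, yielding a locally excluding set. The paper is marginally more direct---it observes that two geodesics with a common initial vertex are automatically asynchronously disjoint (since $\gamma_1(i)=\gamma_2(j)$ forces $i=j$), so no ``cleaning-up'' excision is needed, and it quotes Corollary~\ref{kboundedmclose} to get the explicit constant $e=C(\lceil M\rceil,k)+1$ rather than arguing by contradiction---but the substance is the same.
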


\begin{proof}
Let $G$ be a hyperbolic group which admits a finite inverse-closed generating set $S$ such that $X = \text{Cay}(G,S)$ is $k$-geodetic. By Proposition \ref{fellowtravellerproperty}, there exists $m > 0$ such that any two geodesics $\gamma_i: [0, n_i] \to X$ with $i \in \{1, 2\}$, $\gamma_1(0) = \gamma_2(0)$ and $d_X(\gamma_1(n_1), \gamma_2(n_2)) \leq 1$, will $m$-fellow travel. We claim that the language of all geodesic words for $G$ with respect to $S$ is an $(C(\lceil m \rceil,k)+1)$-locally excluding language, where $C(\lceil m \rceil,k)$ is the bound given in Corollary \ref{kboundedmclose}. Define the finite set \[F:=\{w\in S^\ast \mid |w|\leq C(\lceil m \rceil,k)+1 \ \text{and} \ w \ \text{not a geodesic}\}.\]
Suppose $w\in S^\ast$ is not a geodesic. Then there exists $u,v\in S^\ast$ and $x\in S$ such that $w=uxv$ and $u$ is a geodesic but $ux$ is not. If the last letter of $u$ is $x^{-1}$, then the factor $x^{-1}x\in F$. Now assume that the last letter of $u$ is not $x^{-1}$, so that the terminal vertex of $ux$ does not lie on the path $u$. Let $w'$ be a geodesic representative of $ux$. Clearly, $|w'|$ is either $|u|$ or $|u|-1$. For compatibility with Definition \ref{allthedefinitions}, we let $w''$ equal $w'$ if $|w'| = |u|$ and $w'x^{-1}$ if $|w'|=|u|-1$. Then $w''$ and $u$ are asynchronously disjoint and $m$-fellow travel. Furthermore, since the terminal vertex of $w'$ does not lie on the path of $u$, there exists words $u_1$ and $u_2$ such that $u=u_1u_2$, $|u_2|>0$ and the words $u$ and $w''$ do not co-travel after $|u_1|$ steps. Since $u$ and $w''$ are $m$-fellow travelling, they must be $\lceil m \rceil$-close after $|u_1|$ steps. By Corollary \ref{kboundedmclose}, $u$ and $w''$ are $\lceil m \rceil$-close at most $C(\lceil m \rceil,k)$ times, so $|u_2|\leq C(\lceil m \rceil,k)$. Therefore, the factor of $w$ given by $u_2x$ is not a geodesic and it appears in $F$. Thus the language of geodesics of $G$ is $(C(\lceil m \rceil,k)+1)$-locally excluding over $S$.
\end{proof}

We also have the following fact regarding non-degenerate triangles and bigons that are useful in later arguments:

\begin{lemma}\label{boundonbigonsandtriangles}
Let $k$ be a positive integer and $G$ a hyperbolic group with inverse-closed generating set $S$ such that $\text{Cay}(G,S)$ is $k$-geodetic. Then the non-degenerate geodesic triangles and bigons in $\text{Cay}(G,S)$ have bounded side-length.
\end{lemma}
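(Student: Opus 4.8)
The plan is to reduce both cases to one mechanism supplied by Corollary~\ref{kboundedmclose}: a pair of geodesics that fellow travel, are asynchronously disjoint, and are at distance at least $1$ at every parameter must be short, since at every such parameter they are $m$-close for a fixed $m$, and there can be at most $C(m,k)$ such parameters. The role of non-degeneracy throughout is precisely to upgrade ``distance $\leq m$'' (from fellow travelling) to ``distance in $[1,m]$'' (i.e.\ $m$-close), which is what the corollary counts.

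First I would treat bigons. Let $\alpha,\beta\colon[0,n]\to V$ be a non-degenerate geodesic bigon, so $\alpha(0)=\beta(0)$, $\alpha(n)=\beta(n)$, and $\alpha(i)\neq\beta(i)$ for $i\in[1,n-1]$. Applying Proposition~\ref{fellowtravellerproperty} with $c=0$ yields a constant $m_0>0$ for which $\alpha$ and $\beta$ $m_0$-fellow travel. Since both are geodesics from the common point $\alpha(0)$, any coincidence $\alpha(i)=\beta(j)$ forces $i=d(\alpha(0),\alpha(i))=d(\beta(0),\beta(j))=j$; together with non-degeneracy this shows the restrictions $\alpha\restrict{[1,n-1]}$ and $\beta\restrict{[1,n-1]}$ are asynchronously disjoint, and they are $\lceil m_0\rceil$-close at each of the $n-1$ interior parameters. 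Corollary~\ref{kboundedmclose} then gives $n-1\leq C(\lceil m_0\rceil,k)$, bounding the side-length.

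Next I would treat triangles using $\delta$-thinness. Let $\alpha,\beta,\gamma$ be the sides of a non-degenerate geodesic triangle with vertices $V_1,V_2,V_3$ and let $\delta$ be a thinness constant for $X$. Fix the vertex $V_1$, from which the side $\alpha$ (towards $V_2$) and the reverse of the side $\gamma$ (towards $V_3$) emanate, and let $\ell_1$ be the common distance from $V_1$ to the internal points on these two sides. By $\delta$-thinness their initial segments of length $\ell_1$ $\delta$-fellow travel. Because these segments lie on two \emph{distinct} sides of the triangle, non-degeneracy makes them pairwise distinct at all parameters (including the endpoints $V_2,V_3$), so after deleting the shared vertex $V_1$ they are asynchronously disjoint and remain at distance at least $1$, hence $\lceil\delta\rceil$-close, at each of the $\ell_1$ parameters in $[1,\ell_1]$. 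Corollary~\ref{kboundedmclose} gives $\ell_1\leq C(\lceil\delta\rceil,k)$, and the identical argument at $V_2$ and $V_3$ bounds $\ell_2,\ell_3$. Since each internal point splits its side into two segments whose lengths are exactly the distances $\ell_i$ at the two endpoints, every side has length $\ell_i+\ell_j\leq 2C(\lceil\delta\rceil,k)$, so all side-lengths are bounded.

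The delicate points are purely bookkeeping: confirming that the two geodesics under consideration become genuinely asynchronously disjoint once the shared vertex is removed (this is where non-degeneracy, not mere fellow travelling, is indispensable), and correctly identifying the two half-side lengths with the quantities $\ell_i$ so that the bound on each $\ell_i$ translates into a bound on each side. I expect the main obstacle to be presenting the triangle argument cleanly given the cyclic labelling of the sides and the need to reverse one side so that both emanate from the chosen vertex; once the two emanating geodesics are set up, the close-point count and the appeal to Corollary~\ref{kboundedmclose} are routine.
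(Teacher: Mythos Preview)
Your proposal is correct and follows essentially the same approach as the paper: in both cases the key mechanism is that two sides of the figure $\delta$-fellow travel (by thinness) and, thanks to non-degeneracy, are asynchronously disjoint with distance in $[1,\delta]$ at each relevant parameter, so Corollary~\ref{kboundedmclose} bounds the number of such parameters and hence the side-length.

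The only notable difference is in the handling of bigons. The paper dispatches them in one line by asserting that ``any non-degenerate geodesic bigon forms a non-degenerate geodesic triangle'' and invoking the triangle bound. You instead treat bigons directly via Proposition~\ref{fellowtravellerproperty} with $c=0$, obtain a fellow-travelling constant $m_0$, and apply Corollary~\ref{kboundedmclose} to the interior segment. Your route is arguably cleaner here: taken literally, a bigon viewed as a triangle with a length-zero third side is \emph{not} non-degenerate in the paper's sense (since $\alpha(n)=\beta(n)$ with both indices $\geq 1$), so the paper's reduction is a mild abuse that really means ``the same argument applies''. Your explicit bigon argument avoids this wrinkle at the cost of introducing a second constant $m_0$ alongside $\delta$; the paper keeps a single constant $\delta$ throughout.
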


\begin{proof}
Let $k$ be a positive integer and suppose that $G$ is $k$-geodetic.  Since $G$ is a hyperbolic group, there exists a $\delta>0$ such that geodesic triangles in $\text{Cay}(G,S)$ are $\delta$-thin. Suppose we have a non-degenerate geodesic triangle in $\text{Cay}(G,S)$ with at least one side of length greater than $2C(\delta,k)$, where $C(\delta,k)$ is found in the proof of Corollary \ref{kboundedmclose}. Then we have asynchronously disjoint geodesics that are $\delta$-close and more than $C(\delta,k)$ times. This contradicts Corollary \ref{kboundedmclose}. This also shows that non-degenerate geodesic bigons have bounded side-length since any non-degenerate geodesic bigon forms a non-degenerate geodesic triangle.
\end{proof}

\section{Centralisers of infinite order elements}

In this section we investigate centralisers of infinite order elements in groups with $k$-geodetic Cayley graph. This will lead to a proof of the second part of Theorem \ref{TheTheorem}, restricting which virtually-free groups can be $k$-geodetic. Our result and proof is motivated by Madlener-Otto's \cite{MadlenerOtto} analogous result for groups presented by finite Church-Rosser Thue systems.

We recall a classical combinatorial result for words over any alphabet. The result is due to Lyndon and Schützenberger and can be found in \cite[Thm. 6.5]{DiekertTextbook}.

\begin{lemma}\label{classicwordresult}
Let $x,y,z\in S^\ast$ be words over an alphabet $S$.
\begin{itemize}
    \item[(a)] If $x\neq \lambda$ and $zx=yz$, then there are $s,t\in S^\ast$ and $q\in\mathbb{N}$ such that $x=st, y=ts$ and $z=(ts)^qt$.
    \item[(b)] If $xy=yx$, then both $x$ and $y$ are powers of the same word.
\end{itemize}
\end{lemma}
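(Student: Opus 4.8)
The plan is to prove both parts by induction on word length, handling (a) by a Euclidean-style descent on $|z|$ and then deriving (b) either from (a) or, more transparently, by a direct induction on $|x|+|y|$. In both parts the only tools needed are the observation that if two equal words agree then all their prefixes of a given length agree, together with free cancellation of matching prefixes.

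For part (a), I would first compare lengths in $zx=yz$ to force $|x|=|y|$, and set $\ell:=|x|=|y|$. The descent compares $|z|$ with $\ell$. If $|z|\ge\ell$, then reading the prefix of length $\ell$ on each side of $yz=zx$ shows $y$ is a prefix of $z$, so $z=yw$ with $|w|=|z|-\ell$; substituting and cancelling the leading $y$ yields $wx=yw$, an equation of exactly the same shape with the same $x$ and $y$ but a strictly shorter middle word. Iterating $q:=\lfloor |z|/\ell\rfloor$ times produces a remainder $w$ with $|w|<\ell$. In this terminal situation $w$ is a prefix of $y$, so $y=wy'$; cancelling the leading $w$ from $wx=yw=wy'w$ gives $x=y'w$. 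Setting $t:=w$ and $s:=y'$ one reads off $x=st$ and $y=ts$, and unwinding the $q$ descent steps through $z=y^{q}w=(ts)^{q}t$ gives the claimed form.

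For part (b), I would assume $xy=yx$ and, without loss of generality, $|x|\le|y|$. If $x=\lambda$ then both words are powers of $y$. Otherwise $x$ is a prefix of $y$, so $y=xy'$; substituting into $xy=yx$ and cancelling a leading $x$ gives $xy'=y'x$ of smaller total length, so by induction $x=w^{a}$ and $y'=w^{b}$ for a common word $w$, whence $y=xy'=w^{a+b}$. (Alternatively (b) follows from (a) applied to $x\cdot y=y\cdot x$, but that route still requires an auxiliary commutation argument on the pieces $s,t$, so the direct induction is cleaner.)

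The descents themselves are routine, so the real care is in the bookkeeping rather than any conceptual difficulty. The step I expect to demand the most attention is the terminal stage of (a): one must track the empty-word boundary cases (where some of $s,t,w$ degenerate to $\lambda$ and $q=0$, which is legitimate here since $\mathbb{N}$ includes $0$) and verify that the iterated cancellations assemble into \emph{exactly} $z=(ts)^{q}t$ rather than a cyclic permutation such as $(st)^{q}s$; getting the order of $s$ and $t$ and the position of the trailing factor right is the one place where a sign-of-error can slip in.
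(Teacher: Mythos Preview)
Your argument is correct and follows the standard Lyndon--Sch\"utzenberger descent; the bookkeeping in the terminal step of (a) is right (with $t=w$, $s=y'$ one indeed gets $y=ts$, $x=st$, and unwinding gives $z=y^{q}w=(ts)^{q}t$), and the induction in (b) is sound. Note, however, that the paper does not supply its own proof of this lemma: it is quoted as a classical result and referred to \cite[Thm.~6.5]{DiekertTextbook}, so there is no in-paper argument to compare against---your write-up simply fills in what the paper takes as known.
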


\begin{lemma}\label{murraysidea}
Let $k$ be a positive integer, let $G$ be a group with a finite inverse-closed generating set $S$ such that $\text{Cay}(G,S)$ is $k$-geodetic.  If $u\in S^+$ is a primitive word such that $u^r$ is a geodesic for all $r\geq 1$ and $u$ evaluates to $g\in G$, then $C_G(g)=\langle g \rangle$. 
\end{lemma}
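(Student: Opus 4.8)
The plan is to show that any element centralising $g$ must itself be a power of $g$, by leveraging the word-combinatorial dichotomy of Lemma~\ref{classicwordresult} together with the bound on non-degenerate bigons and triangles from Lemma~\ref{boundonbigonsandtriangles}. Suppose $h \in C_G(g)$; since $\langle g\rangle \subseteq C_G(g)$ always holds, the content is the reverse inclusion. First I would pick a geodesic word $w \in S^\ast$ spelling $h$ and consider, for large $N$, the two words $u^N w$ and $w u^N$, which evaluate to the same group element because $g$ and $h$ commute (so $g^N h \equiv h g^N$). The key geometric input is that $u^r$ is geodesic for every $r$, so the path labelled by $u^N$ is an infinite-order ``axis'' along which $g$ acts by translation; conjugating by $h$ shifts this axis but keeps it a geodesic.

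The core of the argument is to compare the geodesic ray spelled by $u^N$ starting at the identity with the geodesic ray spelled by $u^N$ starting at $h$ (equivalently, translate the first ray by $h$). Because both are geodesics and they represent the commuting relation $g^N h = h g^N$, I would argue that these two rays must fellow travel: either they eventually share a common tail (forcing $h$ into $\langle g\rangle$ by a bigon/cotravelling argument), or they stay a bounded distance apart asynchronously. In the latter case, for $N$ large one extracts from the two parallel geodesics a ladder-like structure of bounded width whose height grows with $N$, contradicting Proposition~\ref{kboundedmapart} (Theorem~\ref{LadderTheorem}); alternatively one produces an arbitrarily long non-degenerate bigon or triangle, contradicting Lemma~\ref{boundonbigonsandtriangles}. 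Thus the two rays must cotravel, meaning $h$ translates the axis of $g$ onto itself.

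Once the axis is preserved, I would pass to the purely combinatorial statement. Translating a geodesic labelled by a power of the primitive word $u$ onto itself forces a relation of the shape $z u^a \equiv u^b z$ on the level of words (up to the bounded discrepancy controlled above), where $z$ is a geodesic representative of $h$ restricted to its behaviour near the axis. Applying Lemma~\ref{classicwordresult}(a) to an equation $z x = y z$ with $x, y$ cyclic rearrangements of a power of $u$ shows that $x$ and $y$ are conjugate rotations $x = st$, $y = ts$ and $z = (ts)^q t$; primitivity of $u$ then forces $s$ or $t$ to be trivial, so $x = y$ is an honest power of $u$ and $z$ is itself a power of $u$. Combined with Lemma~\ref{classicwordresult}(b) to rule out a nontrivial rotation, this pins down $h$ as a power of $g$, giving $C_G(g) \subseteq \langle g\rangle$.

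The main obstacle I anticipate is the passage from the group-theoretic commuting relation to a \emph{clean} word equation of the form $zx = yz$: a priori $h$ need only \emph{asynchronously} fellow travel the axis, so there is bounded ``slack'' between the two parallel geodesics that must be absorbed before Lemma~\ref{classicwordresult} applies. Making this rigorous is exactly where the $k$-geodetic hypothesis does real work: the bounds $A(m,k)$ and the bigon/triangle bound of Lemma~\ref{boundonbigonsandtriangles} are what prevent the slack from persisting or growing, forcing synchronous cotravelling after a bounded prefix and hence an exact word identity. Care will also be needed to ensure the extracted parallel geodesics are genuinely asynchronously disjoint (so that Proposition~\ref{kboundedmapart} is applicable) rather than crossing, which I expect to handle by choosing $N$ large and truncating to the disjoint portion.
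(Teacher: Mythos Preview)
Your overall architecture---compare the ray $\alpha$ labelled $u^\infty$ from $1_G$ with the ray $\beta$ labelled $u^\infty$ from $h$, use the ladder bound to force them to merge, then extract a word identity contradicting primitivity---is exactly the paper's approach. Two genuine gaps, however:

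\textbf{Hyperbolicity is not assumed.} Lemma~\ref{murraysidea} does not suppose $G$ is hyperbolic, so you cannot invoke Lemma~\ref{boundonbigonsandtriangles} (which does). Fortunately this is unnecessary: because $h$ commutes with $g$, at every time $r|u|$ we have $d_X(\alpha(r|u|),\beta(r|u|))=|h|_{G,S}$ exactly, so if $\alpha,\beta$ were asynchronously disjoint they would form a ladder-like structure of fixed width $|v|$ and unbounded height, contradicting Proposition~\ref{kboundedmapart} directly. No fellow-travelling or bigon bound is needed. Likewise, the bound on how many times the merged rays can bifurcate and rejoin is just $\log_2 k$ from $k$-geodeticity, not from Lemma~\ref{boundonbigonsandtriangles}.

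\textbf{The endgame word combinatorics is mis-aimed.} The rays \emph{cannot} synchronously co-travel: they carry the same label from distinct basepoints, so $\alpha(t)=\beta(t)$ would force $h=1_G$. What the ladder bound actually gives is that they eventually co-travel \emph{asynchronously}, meeting after prefixes $u^{m_1}u_1\cdots u_i$ and $u^{m_2}u_1\cdots u_j$ with $i\neq j$ (the case $i=j$ already puts $h\in\langle g\rangle$). From that point on the two copies of $u^\infty$ agree letter-by-letter at an offset $|i-j|$, which yields the \emph{word} equation $u^2=xuy$ with $u=xy$ and hence $xy=yx$; part~(b) of Lemma~\ref{classicwordresult} then kills primitivity. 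Your proposed route through part~(a) with ``$zx=yz$'' does not get off the ground: the relation $z u^a\equiv u^b z$ you wrote is a \emph{group} identity, not a word equation, and there is no mechanism here to promote a geodesic for $h$ into a literal factor of $u^\infty$. The clean contradiction is with primitivity of $u$, not a direct identification of $h$ as a power of $u$.
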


\begin{proof}
For the sake of contradiction, suppose that $C_G(g) \neq \langle g \rangle$. Then there exists $h \in C_G(g)$ such that $h \notin \langle g \rangle$.  Let $v\in S^\ast$ be a geodesic word evaluating to $h \in G$. Let $\alpha$ be the ray in $\text{Cay}(G, S)$ from the vertex $1_G$ with label $u^\infty$ and let $\beta$ be the ray from the vertex $h$ with label $u^\infty$.  Then $\alpha$ is the top path, and $\beta$ the bottom path, in a structure shown schematically in Fig. \ref{fig:lls_fromproof}.

\begin{figure}[ht]
    \centering
\begin{tikzpicture}
  \draw (1, 7) to (9, 7);
  \draw (1, 5.5) to (9, 5.5);
    \draw (10, 7) to (12, 7);
  \draw (10, 5.5) to (12, 5.5);
   \draw (9, 7)[dashed] to (10, 7);
  \draw (9, 5.5)[dashed] to (10, 5.5);
  \node at (1,7)[circle,fill,inner sep=1pt]{};
  \node at (1,5.5)[circle,fill,inner sep=1pt]{};
  \node at (3,7)[circle,fill,inner sep=1pt]{};
  \node at (3,5.5)[circle,fill,inner sep=1pt]{};
  \node at (5,7)[circle,fill,inner sep=1pt]{};
  \node at (5,5.5)[circle,fill,inner sep=1pt]{};
  \node at (7,7)[circle,fill,inner sep=1pt]{};
  \node at (7,5.5)[circle,fill,inner sep=1pt]{};
  \node at (9,7)[circle,fill,inner sep=1pt]{};
  \node at (9,5.5)[circle,fill,inner sep=1pt]{};
  \node at (10,7)[circle,fill,inner sep=1pt]{};
  \node at (10,5.5)[circle,fill,inner sep=1pt]{};
  \node at (12,7)[circle,fill,inner sep=1pt]{};
  \node at (12,5.5)[circle,fill,inner sep=1pt]{};
  \draw (1,5.5) to (1,7);
  \draw (3,5.5) to (3,7);
  \draw (5,5.5) to (5,7);
  \draw (7,5.5) to (7,7);
  \draw (9,5.5) to (9,7);
  \draw (10,5.5) to (10,7);
\draw (12,5.5) to (12,7);
\node at (1,7.3) {${1_G}$};
\node at (2,7.3) {${u}$};
\node at (4,7.3) {${u}$};
\node at (6,7.3) {${u}$};
\node at (8,7.3) {${u}$};
\node at (11,7.3) {${u}$};
\node at (12,7.3) {${u}^r$};
\node at (1,5.2) {${h}$};
\node at (2,5.2) {${u}$};
\node at (4,5.2) {${u}$};
\node at (6,5.2) {$u$};
\node at (8,5.2) {$u$};
\node at (11,5.2) {$u$};
\node at (12,5.2) {$u^rh$};
\node at (0.7,6.25) {$v$};
\node at (2.7,6.25) {$v$};
\node at (4.7,6.25) {$v$};
\node at (6.7,6.25) {$v$};
\node at (8.7,6.25) {$v$};
\node at (9.7,6.25) {$v$};
\node at (11.7,6.25) {$v$};
\end{tikzpicture}
    \caption{\small A schematic of $h$ commuting with powers of $u$}
    \label{fig:lls_fromproof}
\end{figure}
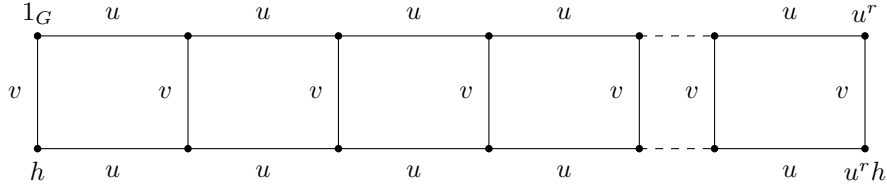

 In this paragraph we show that $\alpha,\beta$ must co-travel but not synchronously, that is, must join after some prefixes $\alpha'\neq\beta'$ of $u^\infty$ as depicted in Fig. \ref{fig:uandudashandudashdash}. Since $\alpha$ and $\beta$ are labelled by the same word but start at distinct vertices in a Cayley graph, they cannot synchronously co-travel. Furthermore, by Proposition \ref{kboundedmapart} they cannot be asynchronously disjoint for arbitrarily large $r$, so we know they must join asynchronously. We then have the diagram depicted in Fig. \ref{fig:uandudashandudashdash}, where $\alpha'$ and $\beta'$ are prefixes of some powers of $u$.

\begin{figure}[ht]
    \centering
    \begin{tikzpicture}
  \draw (1, 7) to (3, 6.25);
  \draw (1, 5.5) to (3, 6.25);
  \draw (3,6.25)[dashed] to (3.5,6.25);
  \node at (1,7)[circle,fill,inner sep=1pt]{};
  \node at (1,5.5)[circle,fill,inner sep=1pt]{};
  \node at (3,6.25)[circle,fill,inner sep=1pt]{};
  \draw (1,5.5) to (1,7);
\node at (1,7.3) {${1_G}$};
\node at (1,5.2) {${h}$};
\node at (2,6.85) {${\alpha'}$};
\node at (2,5.55) {${\beta'}$};
\node at (0.7,6.25) {$v$};
    \end{tikzpicture}
    \caption{\small A depiction of the asynchronous joining}
    \label{fig:uandudashandudashdash}
\end{figure}
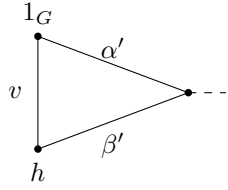

Let $u=u_1\dots u_{|u|}$, so $\alpha'=u^{m_1}u_1\dots u_i$ and $\beta'=u^{m_{2}}u_1\dots u_j$ for some $i,j \in[0,|u|]$. If $i=j$, then $v = u^{m_2 - m_1}$ which is not possible because $h \not \in \langle g \rangle$; so we may assume that $i\neq j$. Now continue moving along $\alpha$ and $\beta$ in Fig. \ref{fig:uandudashandudashdash} starting with $u_{i+1} \dots u_{|u|}$ and $u_{j+1} \dots u_{|u|}$ then powers of $u$. If $\alpha$ and $\beta$ bifurcate, then by applying Proposition \ref{kboundedmapart} starting from the bifurcation point, $\alpha$ and $\beta$ will only remain disjoint for a bounded number of steps. Furthermore, $\alpha$ and $\beta$ cannot bifurcate and meet again more than $\log_2{k}$ times. Hence, $\alpha$ and $\beta$ co-travel forever after some point. First assume $i>j$. Now consider Figure \ref{fig:co-travellingwords}:
\begin{figure}[h]
\centering
\scalebox{0.83}{
$\begin{array}{ll}
u_{i+1}\dots u_{|u|} 
&
  \overbracket{u_1 \ \ \ \ \ \ \ \ \  \dots u_{i-j} \quad u_{i-j+1}\dots u_{|u|}  \ \ \ \ \ \ }^{u} \quad \overbracket{u_1 \ \ \ \ \ \ \ \ \dots u_{i-j} \quad u_{i-j+1}\dots u_{|u|} \ \ \ \ \ }^{u}\\
  u_{j+1}\dots u_{|u|-i+j} 
&
  \underbracket{u_{|u|-i+j+1} \dots u_{|u|}}_x \quad   \underbracket{u_1  \ \ \ \ \ \dots u_{|u|-i+j} \quad  u_{|u|-i+j+1} \dots u_{|u|}}_u
  \quad  \underbracket{u_1\ \ \  \ \dots u_{|u|-i+j}}_y
  \end{array}$
}
    \caption{\small Equating $\alpha$ and $\beta$ as they co-travel forever}
    \label{fig:co-travellingwords}
\end{figure}

By equating words, we deduce that $u^2=xuy$ where  $x=u_{1}\dots u_{|i-j|}$ and $y=u_{1} \dots u_{|u|-i+j}$.  Hence $u=xu'=u''y$ for some words $u'$ and $u''$. Since $|x|+|y|=|u|$, we must have $|u'|=|y|, |u''|=|x|$ so $u=xy$. Hence $xyxy=u^2=xuy=xxyy$, so $xy=yx$. If $i<j$, a similar argument shows that gives $x'y'x'y'=u^2=x'x'y'y'$ for some $x'$ and $y'$. By part (b) of Lemma \ref{classicwordresult}, we find that $u$ is not primitive.
\end{proof}

We will now consider the language of geodesic words for all powers of an infinite order element in a hyperbolic group with $k$-geodetic Cayley graph.

\begin{proposition}\label{powersareregular}
Let $k$ be a positive integer, let $G$ be a group with a finite inverse-closed generating set $S$ such that $\text{Cay}(G,S)$ is $k$-geodetic, and let $g\in G$ be an element of infinite order. For all $n\geq 0$, let $L_n$ be the set of geodesic words for $g^n$ with respect to $S$. If $G$ is hyperbolic then $\bigcup_{n\geq 0}L_n$ is a regular language.
\end{proposition}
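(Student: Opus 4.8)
The plan is to recognise $\bigcup_{n\ge 0}L_n$ with a finite automaton, after intersecting with the (regular) language of all geodesics. The geometric input is that geodesics to powers of $g$ are trapped in a bounded neighbourhood of the axis of $g$, a set on which $\langle g\rangle$ acts with finite quotient; this periodicity is what keeps the bookkeeping finite-state.

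First I would confine the geodesics. Since $g$ has infinite order in a hyperbolic group, $\langle g\rangle$ is undistorted and quasiconvex, so the orbit $A:=\{g^n : n\in\Z\}$ is a quasigeodesic lying within finite Hausdorff distance of a bi-infinite geodesic. Fixing a geodesic word $u_0$ for $g$, the staircase $1_G\to g\to\cdots\to g^n$ is a quasigeodesic from $1_G$ to $g^n$, so by stability of quasigeodesics (the Morse lemma) there is a constant $R$, independent of $n$, such that every geodesic from $1_G$ to $g^n$ lies in the $R$-neighbourhood $\mathcal N:=\{h\in G : d_X(h,A)\le R\}$ of $A$. Thus every word in $\bigcup_{n}L_n$ labels a path inside $\mathcal N$. (The $k$-geodetic bigon bound of Lemma \ref{boundonbigonsandtriangles} gives further rigidity here, but is not needed for regularity.)

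Next I would exploit periodicity. Left multiplication by $g$ is a graph automorphism of $X=\mathrm{Cay}(G,S)$ preserving $A$, hence preserving $\mathcal N$, and every vertex of $\mathcal N$ is a $\langle g\rangle$-translate of a vertex of the finite ball $B_{R+1}(1_G)$; therefore $\mathcal N/\langle g\rangle$ is finite. For $h\in\mathcal N$ write $f(h)\in B_{R+1}(1_G)$ for the canonical $\langle g\rangle$-orbit representative and $\nu(h)\in\Z$ for the (coarsely well-defined, chosen minimal) index with $g^{-\nu(h)}h=f(h)$; note $f(g^t h)=f(h)$ and $\nu(g^t h)=\nu(h)+t$. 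I would build an automaton $M$ whose states are the finitely many values of $f$, each tagged with a coordinate clamped to a finite window $[-N_0,N_0]$, together with a dead state. The start state is $(f=1_G,\ 0)$; on reading $s\in S$ the state at $h$ passes to the state at $hs$ when $hs\in\mathcal N$ and to the dead state otherwise, updating the clamped coordinate by the bounded quantity $\nu(hs)-\nu(h)$. These transitions are well defined precisely because $f$ and the increments of $\nu$ are $\langle g\rangle$-equivariant. The accepting states are $(f=1_G,\ \text{coordinate}\ \ge 0)$, and I would claim $\bigcup_n L_n=L(M)\cap\mathrm{Geo}(G,S)$, where $\mathrm{Geo}(G,S)$ is regular because (being hyperbolic, or by the locally-excluding property established in Proposition \ref{prop:VFPart}) the geodesics of $G$ form a regular language; regular languages are closed under intersection.

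Correctness is where the work concentrates. A geodesic word lies in $\bigcup_n L_n$ iff it ends at some $g^m$ with $m\ge 0$. By the confinement step such a word stays in $\mathcal N$, so $M$ reaches $f=1_G$ exactly when its endpoint lies in $\langle g\rangle$ (since $f(h)=1_G$ forces $h=g^{\nu(h)}$); it remains to certify the sign of $m$ with finitely many states. For this I would use that the nearest-point projection to $A$ is coarsely monotone along a geodesic, so along any geodesic from $1_G$ the coordinate $\nu$ is monotone up to a bounded error; choosing $N_0$ larger than this error, the clamped coordinate never reverses sign once it leaves the window, so ``coordinate $\ge 0$ at the end'' holds iff $m\ge 0$. \textbf{The main obstacle} is exactly this tension: the two defining conditions — being geodesic \emph{as measured from the fixed basepoint} $1_G$, and the inequality $m\ge 0$ — are not invariant under the $\langle g\rangle$-action that supplies the finiteness, whereas the automaton needs them to be locally determined. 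Both are rescued by hyperbolicity restricted to $\mathcal N$: quasiconvexity confines all competing geodesics to $\mathcal N$, so being a geodesic prefix becomes a condition on the finite quotient $\mathcal N/\langle g\rangle$ (equivalently, only finitely many cone types occur along $\mathcal N$), and coarse monotonicity of the projection converts the global sign condition into the finite-state clamped coordinate. Once these two points are nailed down, the rest is routine automata bookkeeping.
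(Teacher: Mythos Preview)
Your approach is sound and genuinely different from the paper's. The paper does not build an automaton on a quotient of the axis neighbourhood; instead it leans hard on the $k$-geodetic hypothesis. Using the bound on non-degenerate bigons (Lemma~\ref{boundonbigonsandtriangles}), it shows that for all large $n$ every geodesic word for $g^n$ shares a long common middle segment, so $L_n=\{\alpha_{(i)}\beta\gamma_{(j)}\}$ with only boundedly many prefixes $\alpha_{(i)}$ and suffixes $\gamma_{(j)}$. Comparing $L_{n_*}$ with $L_{n_*+1}$, once read from $1_G$ and once translated from $g^{-1}$, yields an equation of words $\beta x=y\beta$; Lyndon--Sch\"utzenberger (Lemma~\ref{classicwordresult}) then forces $\beta=(ts)^qt$, and induction gives the explicit pumped form $L_{n_*+c}=\{\alpha_{(i)}(ts)^{q+c}t\gamma_{(j)}\}$, from which regularity is read off directly. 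Your route is more general---as you note, it shows $\bigcup_n L_n$ is regular for \emph{every} hyperbolic group, with no $k$-geodeticity needed---while the paper's argument buys an explicit description of the pumping word that meshes with the combinatorial tools already set up. Since the downstream use (Proposition~\ref{prop:centraliserpart}) only invokes the pumping lemma, either version suffices there.

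One point to tighten: your ``clamped coordinate'' as literally described can drift out of sync with the true $\nu$ once it saturates at $\pm N_0$, so it is not immediate that its sign at the endpoint matches the sign of $m$. The monotonicity you invoke is the right fix, but it is cleaner to make the boundary states absorbing rather than saturating: track $\nu$ exactly while $|\nu|\le N_0$, and once $\nu$ exits that window pass irrevocably to a symbol $+\infty$ or $-\infty$. The Morse lemma applied to the geodesic versus the staircase $1_G,g,\dots,g^m$ gives a constant $C$ with $-C\le\nu\le m+C$ along any geodesic to $g^m$ with $m\ge 0$ (and symmetrically for $m<0$); taking $N_0>C$ then guarantees that such a geodesic never enters $-\infty$ and ends in a state with nonnegative tag, while a geodesic to $g^m$ with $m<0$ never enters $+\infty$ and ends with a negative tag. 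With this adjustment your automaton is correct.
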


\begin{proof}
For a fixed $r>0$, there exists a $p\in\mathbb{N}$ such that $g^{n}\in \{x\in G\mid |x|_{G,S}\geq r\}$ for all $n\geq p$. This is because there is a maximal number of times that powers of $g$ can visit $B_{r}(1_G)$. Diagrammatically we represent the words in $L_n$ as a shaded region from $1_G$ to $g^n$, as seen in Fig. \ref{geodesiccloud}.

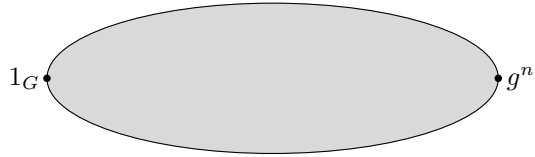
\begin{figure}[ht]
\centering
\begin{tikzpicture}
\filldraw[gray, opacity = .3] (0,0) ellipse (3cm and 1cm);
\draw (0,0) ellipse (3cm and 1cm);
\node at (-3,0)[circle,fill,inner sep=1pt]{};
\node at (3,0)[circle,fill,inner sep=1pt]{};
    \node at (-3.3,0) {$1_G$}; 
    \node at (3.3,0) {$g^n$}; 
\end{tikzpicture}
    \caption{\small Diagrammatic representation of $L_n$}
    \label{geodesiccloud}
\end{figure}

\noindent {\underline{Claim 1}.}
    For a fixed $M_1>0$, there exists an $n_0$ such that \[L_{n}=\{a_{(i)}bc_{(j)} \mid i\in[1,k_1], j\in [1,k_2], k_1k_2\leq k, |b|\geq M_1\},\] for all $n\geq n_0$, where each $a_{(i)}$ is a geodesic representative for some $g_a\in G$ and each $c_{(j)}$ is a geodesic representatives of some $g_c \in G$.
\vspace{1em}

\noindent{\it Proof of Claim 1.} By Lemma \ref{boundonbigonsandtriangles}, there is a bound $B_1$ on the length of non-degenerate geodesic bigons. Furthermore, $B_2=\log_2(k)$ is the maximal number of times that geodesics for the same group element can furcate then rejoin forming non-degenerate geodesic bigons. Then for any $n\geq 0$, $B=B_1B_2$ is the maximum number of total steps that the geodesic words of $g^n$ are not synchronously co-travelling. There is an $n_0\geq 0$ such that $|g^{n}|_{G,S}>B+(M_1-1)(B_2+1)+1$ for all $n\geq n_0$. Hence, the geodesics of $g^n$ all co-travel for at least $(M_1-1)(B_2+1)+1$ steps, and there are at most $B_2+1$ disjoint segments that are separated by a shaded region of non-unique geodesic segments. By the pigeonhole principle at least one of these disjoint segments has length $M_1$. Let the word for such a segment be denoted by $b$. The set of geodesic words from $1_G$ to where the segment $b$ begins are denoted $a_{(i)}$, where $i\in [1,k_1]$ for some $k_1\leq k$, and the geodesic words from where $b$ ends are denoted $c_{(j)}$, where $j\in [1,k_2]$ for some $k_2\leq k$. Note that $k_1k_2\leq k$, since otherwise we would have $|L_n|>k$, contradicting $\text{Cay}(G,S)$ being $k$-geodetic. \hfill $\blacksquare$

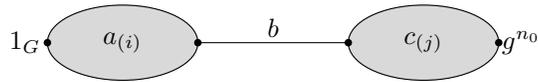
\begin{figure}[h!]
\centering
\begin{tikzpicture}
\filldraw[gray, opacity = .3] (-2,0) ellipse (1cm and 0.5cm);
\draw (-2,0) ellipse (1cm and 0.5cm);
\filldraw[gray, opacity = .3] (2,0) ellipse (1cm and 0.5cm);
\draw (2,0) ellipse (1cm and 0.5cm);
\draw (-1,0) to (1,0);
\node at (-3,0)[circle,fill,inner sep=1pt]{};
\node at (3,0)[circle,fill,inner sep=1pt]{};
\node at (-1,0)[circle,fill,inner sep=1pt]{};
\node at (1,0)[circle,fill,inner sep=1pt]{};
    \node at (-3.3,0) {$1_G$}; 
    \node at (3.3,0) {$g^{n_0}$}; 
\node at (0,0.2) {$b$}; 
\node at (-2,0) {$a_{(i)}$}; 
\node at (2,0) {$c_{(j)}$}; 
\end{tikzpicture}
\caption{\small A unique factor $b$ in all elements of $L_{n}$ with $n\geq n_0$}
\label{alongstretch}
\end{figure}

\noindent {\underline{Claim 2}.}
     For a fixed $M_2>0$, there exists an $n_*$ so that $|g^{n_*+1}|_{G,S}>|g^{n_*}|_{G,S}$ and \[L_{n_*}=\{\alpha_{(i)}\beta\gamma_{(j)} \mid i\in[1,k_1], j\in [1,k_2], |\alpha_{(i)}|,|\gamma_{(j)}|\geq M_2, k_1k_2\leq k\},\] where each $\alpha_{(i)}$ is a geodesic representative for some $g_\alpha\in G$ and each $\gamma_{(j)}$ is a geodesic representative for some $g_\gamma\in G$.
     
\vspace{1em}

\noindent{\it Proof of Claim 2.} 
Take $M_1=2M_2$ from Claim 1. Then shift the prefix of $b$ of length $M_2$ into the left shaded region and shift the suffix of $b$ of length $M_2$ into the right shaded region. Then for each $n\geq n_0$, we have \[L_n=\{\alpha_{(i)}\beta\gamma_{(j)} \mid i\in[1,k_1], j\in [1,k_2], |\alpha_{(i)}|,|\gamma_{(j)}|\geq M_2, k_1k_2\leq k\},\] where each $\alpha_{(i)}$ is a geodesic representative for some $g_\alpha\in G$ and each $\gamma_{(j)}$ is a geodesic representative for some $g_\gamma\in G$. By the opening statement in the proof of this proposition we can choose $n_*\geq n_0$ to be such that $|g^{n_*+1}|_{G,S}>|g^{n_*}|_{G,S}$. \hfill $\blacksquare$

\begin{figure}[h]
    \centering
\begin{tikzpicture}
\filldraw[gray, opacity = .3] (-2,0) ellipse (1cm and 0.5cm);
\draw (-2,0) ellipse (1cm and 0.5cm);
\filldraw[gray, opacity = .3] (2,0) ellipse (1cm and 0.5cm);
\draw (2,0) ellipse (1cm and 0.5cm);
\draw (-1,0) to (1,0);
\node at (-3,0)[circle,fill,inner sep=1pt]{};
\node at (3,0)[circle,fill,inner sep=1pt]{};
\node at (-1,0)[circle,fill,inner sep=1pt]{};
\node at (1,0)[circle,fill,inner sep=1pt]{};
    \node at (-3.3,0) {$1_G$}; 
    \node at (3.3,0) {$g^{n_*}$}; 
\node at (0,0.2) {$\beta$}; 
\node at (-2,0) {$\alpha_{(i)}$}; 
\node at (2,0) {$\gamma_{(j)}$}; 
\end{tikzpicture}
\caption{\small Schematic of $L_{n_*}$}
\label{moreinclouds}
\end{figure}
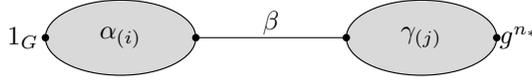

We wish to ensure that the length of geodesics in both shaded regions in Figure \ref{moreinclouds} is at least the maximal side-length of a non-degenerate geodesic triangle (Lemma \ref{boundonbigonsandtriangles}), which we denote by $\Delta$. Hence, in Claim 2 choose $n_*$ to correspond to some $M_2\geq \Delta$. Now consider the geodesics from $1_G$ to $g^{n_*+1}$ depicted in Fig. \ref{1tognplus1}.
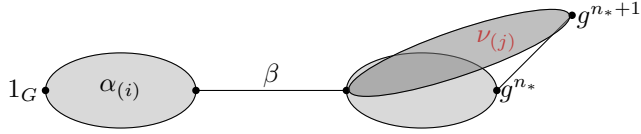
\begin{figure}[h]
    \centering
\begin{tikzpicture}
\filldraw[gray, opacity = .3] (-2,0) ellipse (1cm and 0.5cm);
\draw (-2,0) ellipse (1cm and 0.5cm);
\filldraw[gray, opacity = .3] (2,0) ellipse (1cm and 0.5cm);
\draw (2,0) ellipse (1cm and 0.5cm);
\draw (-1,0) to (1,0);
\draw (4,1) to (3,0);
\node at (-3,0)[circle,fill,inner sep=1pt]{};
\node at (3,0)[circle,fill,inner sep=1pt]{};
\node at (-1,0)[circle,fill,inner sep=1pt]{};
\node at (1,0)[circle,fill,inner sep=1pt]{};
\node at (4,1)[circle,fill,inner sep=1pt]{};
    \node at (-3.3,0) {$1_G$}; 
    \node at (3.3,0) {$g^{n_*}$}; 
    \node at (4.5,1) {$g^{n_*+1}$}; 
\node at (0,0.2) {$\beta$}; 
\node at (-2,0) {$\alpha_{(i)}$}; 
\node at (3,0.66) {\textcolor{red}{$\nu_{(j)}$}};
\filldraw[gray, opacity = .5][rotate around={19:(2.5,0.5)}] (2.5,0.5) ellipse (1.55cm and 0.3cm);
\draw[rotate around={19:(2.5,0.5)}] (2.5,0.5) ellipse (1.55cm and 0.3cm);
\end{tikzpicture}
    \caption{\small Geodesics from $1_G$ to $g^{n_*+1}$}
    \label{1tognplus1}
\end{figure}

Since $|\gamma_{(i)}|\geq\Delta$, the geodesics in $L_{n_*+1}$ share a prefix up to the end of the word $\beta$ to an element of $L_{n_*}$. Hence, $L_{n_*+1}=\{\alpha_{(i)}\beta\nu_{(j)}\mid i\in [1,k_1], j\in [1,k_3] \}$ for some positive integer $k_3$ with $k_1k_3\leq k$. Instead, let us now consider the geodesics from $g^{-1}$ to $g^{n_*}$ depicted in Fig. \ref{ginvtogn}:

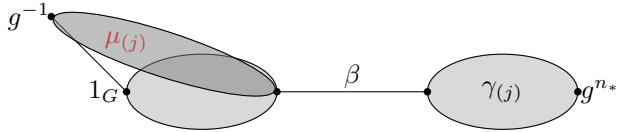
\begin{figure}[h]
    \centering
\begin{tikzpicture}
\filldraw[gray, opacity = .3] (-2,0) ellipse (1cm and 0.5cm);
\draw (-2,0) ellipse (1cm and 0.5cm);
\filldraw[gray, opacity = .3] (2,0) ellipse (1cm and 0.5cm);
\draw (2,0) ellipse (1cm and 0.5cm);
\draw (-1,0) to (1,0);
\draw (-4,1) to (-3,0);
\node at (-3,0)[circle,fill,inner sep=1pt]{};
\node at (3,0)[circle,fill,inner sep=1pt]{};
\node at (-1,0)[circle,fill,inner sep=1pt]{};
\node at (1,0)[circle,fill,inner sep=1pt]{};
\node at (-4,1)[circle,fill,inner sep=1pt]{};
    \node at (-3.3,0) {$1_G$}; 
    \node at (3.3,0) {$g^{n_*}$}; 
    \node at (-4.3,1) {$g^{-1}$}; 
\node at (0,0.2) {$\beta$}; 
\node at (2,0) {$\gamma_{(j)}$}; 
\node at (-3,0.66) {\textcolor{red}{$\mu_{(j)}$}};
\filldraw[gray, opacity = .5][rotate around={162:(-2.5,0.5)}] (-2.5,0.5) ellipse (1.55cm and 0.3cm);
\draw[rotate around={162:(-2.5,0.5)}] (-2.5,0.5) ellipse (1.55cm and 0.3cm);
\end{tikzpicture}
    \caption{\small Geodesics from $g^{-1}$ to $g^{n_*}$}
    \label{ginvtogn}
    \end{figure}
    
Since $|\alpha_{(i)}|\geq\Delta$, the geodesics in $L_{n_*+1}$ share a suffix to an element of $L_{n_*}$ up to the start of the word $\beta$. Hence, $L_{n_*+1}=\{\mu_{(i)}\beta\gamma_{(j)}\mid i\in[1,k_4], j\in [1,k_2] \}$ for some positive integer $k_4$ with $k_4k_2\leq k$. Therefore, for each $i \in [1,k_1]$ and $j\in [1,k_3]$ there is an $l\in [1,k_2]$ and $ m \in [1,k_4]$ such that $\alpha_{(i)}\beta\nu_{(j)}=\mu_{(m)}\beta\gamma_{(l)}$. Since $|g_{n_*+1}|_{G,S}>|g_{n_*}|_{G,S}$, we know that $\alpha_{(i)}$ is a prefix of $\mu_{(m)}$ and $\gamma_{(i)}$ is a suffix of $\nu_{(m)}$. Then we have $\beta x=y\beta$ where $\nu_{(l)}=\alpha_{(i)}y$ and $\mu_{(m)}=x\gamma_{(j)}$. Invoking part (a) of Lemma \ref{classicwordresult} we have $t,s\in S^\ast$ such that $x=st, y=ts$ and $\beta=(ts)^qt$ for some $q\in \N$. So \[L_{n_*}=\{\alpha_{(i)}(ts)^qt\gamma_{(j)}\mid i\in [1,k_1], j\in [1,k_2] \}\] and \[L_{n_*+1}=\{\alpha_{(i)}(ts)^{q+1}t\gamma_{(j)}\mid i\in [1,k_1], j\in [1,k_2] \}.\] Since the prefixes $\alpha_{(i)}$ and suffixes $\gamma_{(j)}$ are preserved we can inductively deduce that \[L_{n_*+c}=\{\alpha_{(i)}(ts)^{q+c}t\gamma_{(j)}\mid i\in [1,k_1], j\in [1,k_2] \}.\] Hence, we conclude \[\bigcup_{n\geq 0}L_n=(\bigcup_{n\geq 0}^{n_*-1}L_{n})\cup\{\alpha_{(i)}(ts)^{q+c}t\gamma_{(j)}\mid i\in [1,k_1], j\in [1,k_2], c\geq 0\},\] which is regular.
\end{proof}

We are now ready to prove the second part of Theorem \ref{TheTheorem}. The proof follows from the proofs of \cite[Thm. 2.3 \& Corollary 2.4]{MadlenerOtto}, but we include it with our own notation for completeness.

\begin{proposition}\label{prop:centraliserpart}
Let $k$ be a positive integer. The centraliser of any infinite order element is infinite cyclic in a hyperbolic $k$-geodetic group.
\end{proposition}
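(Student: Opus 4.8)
The plan is to reduce, via Proposition \ref{powersareregular}, to an element represented by a word all of whose powers are geodesic, and then to run a ladder/co-travelling argument in the spirit of Lemma \ref{murraysidea}.

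First I would unpack the proof of Proposition \ref{powersareregular}. It produces words $\alpha_{(1)}, t, s, \gamma_{(1)}$ and an integer $q$ with $g^{n} = \overline{\alpha_{(1)}}\,\overline{(ts)^{q+n-n_*}t}\,\overline{\gamma_{(1)}}$ for all $n\ge n_*$. Writing $A := \overline{\alpha_{(1)}}$ and $P := \overline{ts}$ and telescoping $g = g^{n+1}(g^{n})^{-1}$, the middle powers cancel and I obtain $g = APA^{-1}$; that is, $g$ is conjugate to $P$. Since conjugate elements have conjugate centralisers, it suffices to prove that $C_G(P)$ is infinite cyclic. Moreover $(ts)^{r}$ occurs as a factor of the geodesic word $\alpha_{(1)}(ts)^{q+c}t\gamma_{(1)}$ for large $c$, and any factor of a geodesic is geodesic, so $(ts)^{r}$ is geodesic for every $r\ge 1$. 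As $P$ is conjugate to the infinite order element $g$, the word $ts$ is nonempty and represents an element of infinite order.

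Next I would pass to the primitive root $u$ of $ts$, say $ts = u^{d}$, and set $P_0 := \overline{u}$, so that $P = P_0^{d}$, every power of $u$ is geodesic, and $P_0$ has infinite order. Lemma \ref{murraysidea} applies verbatim to $u$ and gives $C_G(P_0) = \langle P_0\rangle$; in particular $\langle P_0\rangle \subseteq C_G(P)$ since $P_0$ commutes with $P = P_0^{d}$. The whole proposition therefore comes down to the reverse inclusion $C_G(P) \subseteq \langle P_0\rangle$, i.e. to showing that replacing $P_0$ by its $d$-th power does not enlarge the centraliser. This is the main obstacle: it is exactly where virtually-free groups such as $\mathbb{Z}\times\mathbb{Z}_2$ (in which a proper power becomes central) must be excluded, and the $k$-geodetic hypothesis has to be invoked again rather than pure group theory, since in the presence of torsion roots need not be unique.

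To overcome it I would argue as in Lemma \ref{murraysidea}, but extract information about an arbitrary $x \in C_G(P)$ rather than deriving a contradiction. Let $\alpha$ be the geodesic ray from $1_G$ labelled $u^{\infty}$ and $\beta$ the ray from $x$ labelled $u^{\infty}$, so that $\beta(t) = x\,\alpha(t)$. Since $x$ commutes with $P = P_0^{d}$ it commutes with every $P^{n}$, and evaluating at $t$ of the form $dn|u| = n|ts|$ (where $\alpha(t) = P_0^{dn} = P^{n}$) gives $d_X(\alpha(t),\beta(t)) = |\,\overline{\text{prefix}}^{-1}x\,\overline{\text{prefix}}\,|_{G,S} = |x|_{G,S}$; for general $t$ the conjugating prefix past a full power of $ts$ has length less than $|ts|$, so $\alpha$ and $\beta$ are $m$-fellow travellers for $m = |x|_{G,S} + 2|ts|$. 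Applying Proposition \ref{kboundedmapart} to finite truncations, the rays cannot be asynchronously disjoint indefinitely, and (as in Lemma \ref{murraysidea}) they can bifurcate and rejoin at most $\log_2 k$ times; hence they co-travel forever, meaning $\beta(t) = \alpha(t+\Delta)$ for all large $t$ and a fixed integer shift $\Delta$. Matching the edge labels of the two rays shows that the cyclic word $u$ is invariant under rotation by $\Delta \bmod |u|$; since $u$ is primitive this forces $|u|\mid\Delta$. Writing $\Delta = M|u|$ and evaluating $\beta(t) = \alpha(t+\Delta)$ at $t = dn|u|$ yields $x\,P_0^{dn} = P_0^{dn+M}$, so $x = P_0^{M} \in \langle P_0\rangle$. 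This proves $C_G(P) = \langle P_0\rangle$, which is infinite cyclic because $P_0$ has infinite order; consequently $C_G(g)$, being conjugate to $C_G(P)$, is infinite cyclic as well.
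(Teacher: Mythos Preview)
Your argument is correct and follows essentially the same strategy as the paper's proof: reduce via Proposition~\ref{powersareregular} to a conjugate of $g$ represented by a word all of whose powers are geodesic, pass to the primitive root, and apply the ladder/co-travelling argument of Lemma~\ref{murraysidea}. Two minor differences are worth noting. First, the paper invokes the pumping lemma on the regular language $\bigcup_{n} L_n$ to extract a pumpable factor $w$ and conjugator $z$, whereas you read off $ts$ and $A=\overline{\alpha_{(1)}}$ directly from the explicit description obtained inside the proof of Proposition~\ref{powersareregular}; both routes yield the same conjugacy of $g$ with the element represented by the pumped word. Second, you explicitly re-run the co-travelling argument to establish $C_G(P_0^{\,d})\subseteq\langle P_0\rangle$, whereas the paper, having shown only that $zhz^{-1}$ commutes with $\overline{y}^{\,m}$, cites Lemma~\ref{murraysidea} for the conclusion $zhz^{-1}\in\langle\overline y\rangle$; strictly speaking, the \emph{statement} of Lemma~\ref{murraysidea} gives $C_G(\overline y)=\langle\overline y\rangle$ rather than $C_G(\overline y^{\,m})\subseteq\langle\overline y\rangle$, so your more careful handling of this point (which is exactly where examples such as $\Z\times\Z_2$ must be ruled out) is, if anything, a small improvement on the paper's exposition.
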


\begin{proof}
Let $G$ be a hyperbolic $k$-geodetic group, and let $S$ be a finite generating set such that $\text{Cay}(G, S)$ is $k$-geodetic. Let $g\in G$ be an infinite order element.

By Proposition \ref{powersareregular}, if $L_n$ is the set of geodesic words for $g^n$, then $\bigcup_{n\geq 0}L_n$ is regular. By the pumping lemma for regular languages there is a subset of $\bigcup_{n\geq 0}L_n$ given by $\{xw^iz\mid i\geq 0\}$ such that $|w|\neq 0$. Let $y$ be the primitive root of $w$, so $w=y^m$ for some $m\geq 0$. Since $w^i$ is a geodesic for all $i\geq 0$, all powers of $y$ are geodesics. For any $i\geq 0$ there exists an index $j_i$ such that $xy^{mi}z$ is a geodesic representative of $g^{j_i}$. Since there are at most $k$ representatives for a given $g^{j_i}$, we can choose an $n\geq0$ such that $j_n<j_{n+1}$. 

Since $xw^{n+1}z\equiv g^{j_{n+1}}$ and $xw^{n}z\equiv g^{j_{n}}$ we find that $g^{j_{n+1}-j_n}\equiv z^{-1}wz$. Let $h\in C_G(g)$, so \[y^m(zhz^{-1})\equiv zz^{-1}wzhz^{-1}\equiv zg^{j_{n+1}-j_n}hz^{-1}\equiv zhg^{j_{n+1}-j_n}z^{-1}\equiv(zhz^{-1})y^m,\] and by Lemma \ref{murraysidea} $zhz^{-1}\in \langle y\rangle$. We have shown that any $h\in C_G(g)$ is contained in  $\langle z^{-1}yz\rangle\cong \Z$, so $C_G(g)\leq \mathbb{Z}$. The result follows since the only non-trivial subgroups of an infinite cyclic group are infinite cyclic.
\end{proof}

Propositions \ref{prop:VFPart} and \ref{prop:centraliserpart} together yield Theorem \ref{TheTheorem}. Furthermore, Proposition \ref{prop:centraliserpart} immediately yields the following result.

\begin{corollary}\label{finiteordercentraliser}
Let $k$ be a positive integer and let $G$ be a group with a finite inverse-closed generating set $S$ such that $\text{Cay}(G,S)$ is $k$-geodetic.  If $G$ is hyperbolic, and $g,h\in G$ are commuting non-trivial elements, then:
\begin{itemize}
    \item[(a)] If $g$ has finite order, then $gh$ and $h$ have finite order.
    \item[(b)] If $g$ has infinite order, then $h$ has infinite order and either $g$ and $h$ are inverses or $gh$ has infinite order.
\end{itemize}
\end{corollary}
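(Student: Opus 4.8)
The plan is to derive both parts directly from Proposition \ref{prop:centraliserpart}, using the elementary fact that the identity is the only finite-order element of an infinite cyclic group.

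For part (a), suppose $g$ has finite order and commutes with the non-trivial element $h$. I would first argue by contradiction that $h$ cannot have infinite order: if it did, then $C_G(h)$ would be infinite cyclic by Proposition \ref{prop:centraliserpart}, and since $g$ commutes with $h$ we would have $g\in C_G(h)$; but $g$ has finite order, which forces $g=1_G$ and contradicts that $g$ is non-trivial. Hence $h$ has finite order. Then $gh$ has finite order as well, because two commuting elements of finite orders $a$ and $b$ satisfy $(gh)^{ab}=g^{ab}h^{ab}=1_G$.

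For part (b), suppose $g$ has infinite order. By Proposition \ref{prop:centraliserpart}, $C_G(g)=\langle c\rangle\cong\mathbb{Z}$ for some infinite-order element $c$. Since $h$ commutes with $g$ we have $h\in\langle c\rangle$; as $h$ is non-trivial and the only finite-order element of $\langle c\rangle$ is $1_G$, it follows that $h$ has infinite order. Writing $g=c^a$ and $h=c^b$ with $a,b\neq 0$, we obtain $gh=c^{a+b}$. Since $c$ has infinite order, $gh$ has finite order precisely when $a+b=0$, that is, precisely when $h=g^{-1}$. Thus either $g$ and $h$ are inverses (the case $a+b=0$) or $gh=c^{a+b}$ has infinite order, which gives the stated dichotomy.

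There is no serious obstacle here: the corollary is essentially a bookkeeping consequence of the infinite-cyclicity of centralisers established in Proposition \ref{prop:centraliserpart}, reducing each assertion to arithmetic inside a common copy of $\mathbb{Z}$. The only point requiring a moment's care is the characterisation in (b), where one must observe that a product of two powers of $c$ has finite order exactly when the exponents cancel; this is what pins down the exceptional case as $g$ and $h$ being mutually inverse.
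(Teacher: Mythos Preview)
Your proof is correct and is exactly the intended argument: the paper does not write out a proof at all, stating only that the corollary follows immediately from Proposition~\ref{prop:centraliserpart}, and your derivation is precisely the routine unpacking of that claim.
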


\begin{remark}
In general, the centraliser of an infinite order element of a virtually-free group is virtually-cyclic (see \cite[III. $\Gamma.$ Cor. 3.10 ]{MetricSpacesofNon-PositiveCurvature}), so Proposition \ref{prop:centraliserpart} excludes many virtually-free groups from being $k$-geodetic groups.
\end{remark}

\section*{Acknowledgments}

This research is supported by  Australian Research Council grant DP210100271. The authors would like to acknowledge the referee for helpful comments and corrections.

\bibliographystyle{plain}
\bibliography{kgeodeticbib}

\begin{thebibliography}{10}

\bibitem{MetricSpacesofNon-PositiveCurvature}
Martin~R. Bridson and Andr\'{e} Haefliger.
\newblock {\em Metric spaces of non-positive curvature}, volume 319 of {\em
  Grundlehren der mathematischen Wissenschaften [Fundamental Principles of
  Mathematical Sciences]}.
\newblock Springer-Verlag, Berlin, 1999.

\bibitem{DiekertTextbook}
Volker Diekert, Manfred Kufleitner, Gerhard Rosenberger, and Ulrich Hertrampf.
\newblock {\em Discrete algebraic methods}.
\newblock De Gruyter Textbook. De Gruyter, Berlin, 2016.
\newblock Arithmetic, cryptography, automata and groups.

\bibitem{WordProcessingInGroups}
David B.~A. Epstein, James~W. Cannon, Derek~F. Holt, Silvio V.~F. Levy,
  Michael~S. Paterson, and William~P. Thurston.
\newblock {\em Word processing in groups}.
\newblock Jones and Bartlett Publishers, Boston, MA, 1992.

\bibitem{Frasser}
Carlos~E. Frasser.
\newblock The open problem of finding a general classification of geodetic
  graphs, 2022.

\bibitem{GilmanHermillerHoltRees}
Robert~H. Gilman, Susan Hermiller, Derek~F. Holt, and Sarah Rees.
\newblock A characterisation of virtually free groups.
\newblock {\em Arch. Math. (Basel)}, 89(4):289--295, 2007.

\bibitem{MadlenerOtto}
Klaus Madlener and Friedrich Otto.
\newblock Commutativity in groups presented by finite {C}hurch-{R}osser {T}hue
  systems.
\newblock {\em RAIRO Inform. Th\'{e}or. Appl.}, 22(1):93--111, 1988.

\bibitem{Ore}
Oystein Ore.
\newblock {\em Theory of graphs}.
\newblock Third printing, with corrections. American Mathematical Society
  Colloquium Publications, Vol. XXXVIII. American Mathematical Society,
  Providence, R.I., 1967.

\bibitem{Papasoglu}
Panagiotis Papasoglu.
\newblock {\em Geometric methods in group theory}.
\newblock ProQuest LLC, Ann Arbor, MI, 1993.
\newblock Thesis (Ph.D.)--Columbia University.

\bibitem{ShapiroPascal}
Michael Shapiro.
\newblock Pascal's triangles in abelian and hyperbolic groups.
\newblock {\em J. Austral. Math. Soc. Ser. A}, 63(2):281--288, 1997.

\bibitem{bigeodetic}
N.~Srinivasan, J.~Opatrn\'{y}, and V.~S. Alagar.
\newblock Bigeodetic graphs.
\newblock {\em Graphs Combin.}, 4(4):379--392, 1988.

\end{thebibliography}

\end{document}